\documentclass[11pt,reqno]{amsart}

\usepackage[T1]{fontenc}
\usepackage[a4paper,margin=2.8cm]{geometry}
\usepackage{amsmath,amssymb,amsfonts,amsthm,mathtools}
\usepackage{enumitem}
\usepackage{microtype}
\usepackage{booktabs}
\usepackage[hidelinks,bookmarksdepth=3]{hyperref}
\usepackage[nameinlink,noabbrev]{cleveref}

\numberwithin{equation}{section}

\theoremstyle{plain}
\newtheorem{theorem}{Theorem}[section]
\newtheorem{lemma}[theorem]{Lemma}
\newtheorem{corollary}[theorem]{Corollary}

\theoremstyle{definition}
\newtheorem{definition}[theorem]{Definition}

\newcommand{\Prb}{\mathbb P}
\newcommand{\E}{\mathbb E}
\newcommand{\calA}{\mathcal A}
\newcommand{\calD}{\mathcal D}
\newcommand{\calM}{\mathcal M}
\newcommand{\calR}{\mathcal R}
\newcommand{\calT}{\mathcal T}

\newcommand{\supp}{\operatorname{supp}}
\newcommand{\Bin}{\operatorname{Bin}}
\newcommand{\dist}{\operatorname{dist}}
\newcommand{\eps}{\varepsilon}
\newcommand{\one}{\mathbf 1}

\usepackage[
  style=numeric,
  backend=biber,
  maxbibnames=99,
  minbibnames=99,
  maxcitenames=99,
  mincitenames=99
]{biblatex}
\AtBeginDocument{
  \setlength{\abovedisplayskip}{9pt plus 2pt minus 3pt}
  \setlength{\belowdisplayskip}{9pt plus 2pt minus 3pt}
  \setlength{\abovedisplayshortskip}{6pt plus 2pt minus 2pt}
  \setlength{\belowdisplayshortskip}{6pt plus 2pt minus 2pt}
  \setlength{\jot}{4pt}
}

\title[Ranked spreadness and sample-based testing]
{Ranked spreadness and sample-based testing}
\author[G. Carenini]{Gaia Carenini}
\address{Trinity College Cambridge, Department of Pure Mathematics and Mathematical Statistics,
Centre for Mathematical Sciences, Wilberforce Road, Cambridge CB3 0WA, United Kingdom}
\email{gc645@cam.ac.uk}
\date{\today}

\begin{document}

\begin{abstract}
In this note, we introduce the notion of \emph{ranked spreadness}, a strengthening of the usual spread
condition in which the elements of each member can be ordered so that their
one-coordinate marginals decay geometrically with their rank.  This additional
structure removes the dependence on the maximum set size in random-containment
estimates.  We prove width-free hitting and weighted-concentration theorems for
ranked-spread set systems, together with an elementary kernel-extraction theorem
showing that ranked spreadness arises naturally in arbitrary distributions on
small sets.

Our main application is to the simulation of nonadaptive property testers
by sample-based testers.  If a one-sided tester has average query complexity
$d$ and rejects every far input with probability at least $\delta$, then, for
every integer $c>d/\delta$, it admits a one-sided sample-based simulation with
expected sample complexity
$O_{d,\delta,|\Sigma|}\bigl(n^{1-1/c}\bigr)$.
More generally, if positive inputs are rejected with probability at most
$\gamma$ and far inputs with probability at least $\delta>\gamma$, the same
conclusion holds for every $c>d/(\delta-\gamma)$.  In particular, for
constant-query nonadaptive testers we obtain an exponent $1-\Theta(1/q)$,
matching, up to the dependence on the rejection gap, the exponent conjectured
by Fischer, Lachish, and Vasudev.
\end{abstract}

\maketitle

\section{Introduction}

Spreadness is a basic pseudorandomness condition for set systems.  In one
standard normalization, a weighted family $\mathcal F$ is $(m,k)$-spread if
$$
  \mu\bigl(\{F\in\mathcal F:A\subseteq F\}\bigr)
  \le m k^{-|A|}
$$
for every nonempty set $A$.  Such bounds control the clustering of members of
$\mathcal F$ and underlie the robust-sunflower method, expectation-threshold
results, and a number of sampling and local-to-global arguments
\cite{alweiss2021improved,rao2020coding,frankston2021thresholds,
parkpham2024proof,mossel2025bayesian}.

A persistent feature of general spread-to-hitting theorems is a dependence on
the maximum size of a member.  If every member has size at most $r$, ordinary
spreadness typically guarantees random containment only at density of order
$(\log r)/k$.  This logarithmic loss is necessary for unrestricted spread
families, but it can be artificial when the spread condition comes from an
underlying geometric hierarchy of one-coordinate marginals.

In this work, we isolate such a hierarchy through the notion of
\emph{ranked spreadness}.  A weighted indexed family $(P_t)_{t\in\calT}$ is
ranked-spread if, for every $t$, the elements of $P_t$ can be ordered as
$P_t=\{x_1(t),\ldots,x_{|P_t|}(t)\}
$
so that the marginal of the element in rank $i$ is at most $mk^{-i}$.  The
ordering is allowed to depend on the member $P_t$; there need not be a global
rank assigned to the ground-set elements.  Ranked spreadness implies ordinary
spreadness with the same parameters, but retains considerably more information.

The reason why this extra information is useful is fairly simple.  When two members
intersect, we charge their interaction to the last-ranked element of the first
member lying in the second.  At sampling density $p=\alpha/k$, this bounds the
Janson dependency sum by a multiple of
$$
  \sum_{i=1}^{r}(pk)^{-i}
  =
  \sum_{i=1}^{r}\alpha^{-i}.
$$
For every $\alpha>1$, this is bounded independently of $r$.  Thus ranked
spreadness converts the width-dependent loss in the usual spread lemma into a
convergent geometric series.

\subsection{The ranked-spread method}

We develop three complementary forms of this principle. The first is a \emph{random-hitting theorem}.  If an indexed family is
$(m,k)$-ranked-spread and $\calA$ is a subfamily of mass $\tau$, then a
Bernoulli sample of density $p=\alpha/k$ contains some member of $\calA$ with
failure probability at most
$\exp\left(
    -{\tau}/{2m\sum_{i=1}^{r}\alpha^{-i}}
  \right)$.
In particular, whenever $\alpha>1$, the failure probability is at most
$\exp\left(-{(\alpha-1)\tau}/{2m}\right)$,
with no dependence on the maximum member size $r$.

The second is a \emph{weighted-concentration theorem}.  For a sampled set $W$, define
$$
  X_{\calA}
  =
  \sum_{t\in\calA}
  \mu(t)p^{-|P_t|}
  \one_{\{P_t\subseteq W\}}.
$$
This is an unbiased estimator for the mass $\mu(\calA)$.  Ranked spreadness
controls both its variance and its lower tail, again through the geometric
series $\sum_i\alpha^{-i}$.  This weighted statement is strictly more
informative than the assertion that at least one member is exposed, since it permits
one to estimate the total mass of an indexed subfamily from a random sample.

The third ingredient is a \emph{kernel-extraction theorem}.  Given an arbitrary
distribution on sets of average size $d$, and an integer $c>d$, we find a
scale $k=n^{1/c}$, a kernel $K$ of size at most $n^{1-1/c}$, and a subfamily of
mass at least $1-d/c$ such that the petals obtained after removing $K$ are
ranked-spread.  The proof is elementary.  We divide the one-coordinate
marginals into geometric scales and use an interval-packing argument to choose
a boundary at which most members exhibit the required rank-by-rank decay.

\subsection{Robust daisies}

Our first application is a strengthened robust daisy lemma.  Robust daisies
were introduced by Goldberg, Gur, and Saraogi
\cite{goldberg2026robust} in order to retain random-hitting guarantees
simultaneously for every subfamily of a distribution on small sets.  Their
framework already yields one-sided sample-based simulations, but its general
extraction theorem incurs losses depending on the set width and on the size of
the support.

Ranked kernel extraction preserves the more precise marginal hierarchy hidden
in the extraction procedure.  Combining it with the ranked hitting theorem
gives robustness at every density
$p=\alpha n^{-1/c}$ with $\alpha>1$,
and produces a failure exponent independent of both the support size and the
maximum petal width.  In the normalization of
\cite[Lemma~5.1]{goldberg2026robust}, their stated bound applies for
$\alpha>2q$ and has exponent proportional to
${\alpha}/{(q^2\log|\calD|)}\cdot|K|$.
Our bound applies for every $\alpha>1$ and has exponent proportional to
${(\alpha-1)}/{q}\cdot B$,
where $B$ is the extraction scale and may be larger than the kernel itself.
Thus the ranked-spread formulation both simplifies the extraction and
identifies the parameter naturally governing robustness.

\subsection{Application to sample-based testing}

Our principal application concerns the conversion of nonadaptive property
testers into sample-based testers.  Property testing, introduced by Goldreich,
Goldwasser, and Ron~\cite{goldreichgoldwasserron1998}, asks how efficiently one
can distinguish inputs satisfying a property from inputs far from it.  A
standard tester chooses a small set of coordinates to query.  A
\emph{sample-based tester} instead observes the values on a uniform Bernoulli
sample of the coordinates and then performs arbitrary post-processing.  This
model was systematically studied by Goldreich and
Ron~\cite{goldreichron2016sample}, who asked when arbitrary testers can be
simulated from random samples.

Fischer, Lachish, and Vasudev~\cite{fischerlachishvasudev2015} proved that
every constant-query nonadaptive tester over a fixed alphabet has a sublinear
sample-based simulation.  In the one-sided case, their direct argument uses
sampling probability of order $n^{-1/q^2}$, and they conjectured that the
correct exponent should be $1/q$.  Dall'Agnol, Gur, and
Lachish~\cite{dallagnol2023structural} subsequently treated adaptive robust
local algorithms, obtaining exponent $1/O(q^2\log^2 q)$ in that more general
setting.  The robust-daisy machinery of Goldberg, Gur, and Saraogi also implies
a one-sided conversion, with slightly weaker quantitative parameters than those
obtained here.

We prove a linear, rather than quadratic, dependence on the query complexity.
More precisely, our bounds depend on the \emph{average} number of queries, as
well as on the rejection gap.

Let $\Pi\subseteq\Sigma^n$ be a property over a finite alphabet, and let
$r_T(x)$ denote the rejection probability of a nonadaptive tester $T$ on input
$x$.  We first state the one-sided result.

\begin{theorem}[One-sided query-to-sample conversion]
\label{thm:one-sided-main}
Let $\Sigma$ be a finite alphabet, let $\eps,\delta\in(0,1]$, and let $T$ be
a one-sided nonadaptive $\eps/2$-tester for $\Pi\subseteq\Sigma^n$.  Suppose
that $T$ has average query complexity $d>0$ and rejects every
$\eps/2$-far input with probability at least $\delta$.  Let $c$ be an integer
with $c>d/\delta$, and let
$\beta=\delta-\frac dc>0$, and 
$\alpha_{\mathrm{hit}}
  =1+\frac{2d}{\beta}\bigl(\log|\Sigma|+\log3\bigr)$.
Then, for all sufficiently large $n$, $\Pi$ has a one-sided $\eps$-tester
which samples every coordinate independently with probability
$p=\alpha_{\mathrm{hit}}n^{-1/c}$.  It rejects every $\eps$-far input with
probability at least $2/3$ and has expected sample complexity
$\alpha_{\mathrm{hit}}n^{1-1/c}$.
\end{theorem}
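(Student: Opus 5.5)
We combine the kernel-extraction theorem with the ranked hitting theorem, applied separately to each far input's family of rejecting query sets. Throughout, $T$ is a distribution $\mu$ on query sets $Q\subseteq[n]$ with $\E|Q|=d$, and for each $Q$ a rejecting set $R_Q\subseteq\Sigma^Q$ of local views. One-sidedness means that $y|_Q\in R_Q$ certifies that no $z\in\Pi$ agrees with $y$ on $Q$.

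\textbf{Step 1: extraction.} Apply the kernel-extraction theorem to $\mu$ with the integer $c>d/\delta$. This yields a kernel $K$ with $|K|\le n^{1-1/c}$, the scale $k=n^{1/c}$, and a subfamily $\calT$ with $\mu(\calT)\ge 1-d/c$. On $\calT$ the petals $P_t=Q_t\setminus K$ are $(m,k)$-ranked-spread, with $m$ depending only on $d$ (roughly of order $d$).

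\textbf{Step 2: the sample-based tester.} The tester draws a Bernoulli sample $W$ of density $p=\alpha_{\mathrm{hit}}/k$. It rejects if and only if there exist $t\in\calT$ and an assignment $\sigma\in\Sigma^{Q_t\cap K}$ such that the view combining $\sigma$ with $x|_{P_t}$ lies in $R_{Q_t}$, with $P_t\subseteq W$, and such that no $z\in\Pi$ agrees with $x$ on $W$ and with $\sigma$ on $K\cap Q_t$.

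A cleaner version, which I would actually use, is simply: reject iff no $z\in\Pi$ agrees with $x$ on $W$. This is trivially one-sided, since $x\in\Pi$ is never rejected. The expected sample size is $pn=\alpha_{\mathrm{hit}}n^{1-1/c}$.

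\textbf{Step 3: soundness.} Fix an $\eps$-far input $x$. The plan is to show that, with probability at least $2/3$, $W$ already exposes a witness, namely some $t$ with $P_t\subseteq W$ whose partial view forces rejection of every completion consistent with $x|_W$.

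The kernel is handled by enumeration. For every $y\in\Sigma^n$ with $y|_{[n]\setminus K}=x|_{[n]\setminus K}$, $y$ differs from $x$ on at most $|K|\le n^{1-1/c}=o(\eps n)$ coordinates, so for large $n$ it is $\eps/2$-far. Hence $T$ rejects $y$ with probability at least $\delta$, and therefore
$$\mu(\calA_y)\ge\delta-d/c=\beta, \qquad \calA_y=\{t\in\calT: y|_{Q_t}\in R_{Q_t}\}.$$
The ranked hitting theorem at $\alpha=\alpha_{\mathrm{hit}}>1$ gives
$$\Prb\bigl[\text{no }P_t\subseteq W,\ t\in\calA_y\bigr]\le\exp\bigl(-(\alpha-1)\beta/(2m)\bigr).$$

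It would be wasteful to union bound over all $y$. The union bound should only range over the relevant kernel assignments, namely $y|_{K\cap Q_t}$ with $|Q_t|$ of size about $d$. Each such local pattern contributes a factor of at most $|\Sigma|^{d}$, and the extra $\log 3$ provides the $1/3$ failure margin. This is exactly why $\alpha_{\mathrm{hit}}-1=\frac{2d}{\beta}(\log|\Sigma|+\log3)$, with $m$ normalized to about $d$: the failure bound becomes $(3|\Sigma|)^{-d}$.

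If a hit occurs for every relevant kernel pattern, then every $z$ agreeing with $x$ on $W$ has some $t$ with $z|_{Q_t}\in R_{Q_t}$, so $z\notin\Pi$ and the tester rejects.

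\textbf{Main obstacle.} The hardest step is making this union bound over kernel completions genuinely cost only $|\Sigma|^{O(d)}$ rather than $|\Sigma|^{|K|}$. I would first try to build the hitting families per query set. For each $t$ and each $\sigma\in\Sigma^{Q_t\cap K}$, I would index a member $(t,\sigma)$ with weight $\mu(t)|\Sigma|^{-|Q_t\cap K|}$ and the same petal. This keeps the family ranked-spread with $m$ inflated by a $|\Sigma|^{O(d)}$ factor. I would then apply the hitting theorem once, to the subfamily of pairs that reject for all $z$ consistent with $x$ off $K$, averaging over uniformly random kernel completions. That averaging argument yields the mass $\beta$ times the normalization loss, which is absorbed into $\alpha_{\mathrm{hit}}$.
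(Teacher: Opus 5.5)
Your tester is fine: rejecting iff no $z\in\Pi$ agrees with $x$ on $W$ is one-sided, and it rejects whenever the paper's elimination tester does. The soundness argument, however, has a genuine gap, and it comes from misreading the extraction parameters. \Cref{thm:ranked-extraction} does not give $m$ of order $d$. It makes the petals under $\mu_D$ ranked-spread with parameter $m=d/(\mu(D)B)$, where $B=n^{1-j/c}\ge|K|$ is the extraction scale, and this factor $1/B$ is the whole point. Via \cref{cor:ranked-kernel-hitting}, the probability that a fixed kernel assignment $a$ is not eliminated is at most $\exp\bigl(-(\alpha_{\mathrm{hit}}-1)\beta B/(2d)\bigr)=(3|\Sigma|)^{-B}$. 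This is exponentially small in $B$, so the plain union bound over all $|\Sigma|^{|K|}\le|\Sigma|^{B}$ kernel assignments leaves failure probability at most $3^{-B}\le 1/3$. That is exactly how $\alpha_{\mathrm{hit}}$ is calibrated in the paper. With your $m\approx d$, the per-completion failure is $\exp(-\log(3|\Sigma|))=1/(3|\Sigma|)$, not $(3|\Sigma|)^{-d}$ as you claim. A constant per-completion failure cannot survive a union bound over exponentially many completions.

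Having lost the factor $B$, you try to shrink the union bound to $|\Sigma|^{O(d)}$ ``relevant'' local patterns, and neither version of this works. Soundness needs the simultaneous event that, for every completion $y$ (equivalently every $a\in\Sigma^K$), some $t\in\calA_y$ has $P_t\subseteq W$. The families $\calA_y$ depend on all of $y|_K$, because different tests read different parts of the kernel, so this event is not an intersection of $|\Sigma|^{O(d)}$ local events. Your weighted-pairs construction fails for the same reason. Applying the hitting theorem once to the family of rejecting pairs $(t,\sigma)$ only guarantees that \emph{some} rejecting pair is exposed, and that rules out only the completions with $y|_{Q_t\cap K}=\sigma$, not all of them. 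The missing idea is a failure exponent proportional to $B\,\mu(\calA)$, valid for every subfamily $\calA$ (the robust-daisy property). That is what makes the naive union bound over $\Sigma^K$ go through, with no need to localize it.
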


The one-sided simulation uses only the hitting form of ranked spreadness.
After extracting a small kernel, the new tester considers every possible
assignment to the kernel.  For each false assignment, the rejecting tests form
a subfamily of positive mass, and the ranked hitting theorem ensures that the
sample exposes one of their petals.  Perfect completeness guarantees that the
true kernel assignment is never eliminated.

The two-sided setting requires more.  A valid input may itself expose
rejecting tests, so the existence of a single rejecting local view carries no
useful information.  Instead, the tester must estimate the mass of rejecting
tests associated with each possible kernel assignment.  This is precisely the
role of weighted ranked concentration.

\begin{theorem}[Two-sided query-to-sample conversion]
\label{thm:two-sided-main}
Let $\Sigma$ be a finite alphabet, let $\eps\in(0,1]$, and let
$0\le\gamma<\delta\le1$.  Let $T$ be a nonadaptive tester for
$\Pi\subseteq\Sigma^n$.  Suppose that $T$ has average query complexity
$d>0$, rejects every input in $\Pi$ with probability at most $\gamma$, and
rejects every $\eps/2$-far input with probability at least $\delta$.  Let
$c$ be an integer with
$c>\frac{d}{\delta-\gamma}$,
and let $\beta=\delta-\frac dc$,
$g=\beta-\gamma>0$,
and
$\alpha_{\mathrm{wt}}=1+\frac{4d}{g^2}\bigl(3\gamma+2\log(3|\Sigma|)\bigr)$.
Then, for all sufficiently large $n$, $\Pi$ has a two-sided $\eps$-tester
which samples every coordinate independently with probability
$p=\alpha_{\mathrm{wt}}n^{-1/c}$.
It accepts every input in $\Pi$ with probability at least $2/3$, rejects
every $\eps$-far input with probability at least $2/3$, and has expected
sample complexity
$\alpha_{\mathrm{wt}}n^{1-1/c}$.
\end{theorem}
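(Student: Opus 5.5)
We prove \Cref{thm:two-sided-main} by running the one-sided argument with the hitting theorem replaced by weighted ranked concentration. View $T$ as a distribution $\mu$ over tests $t\in\calT$. Each test has a query set $Q_t\subseteq[n]$ with $\E|Q_t|=d$ and a rejection predicate on $\Sigma^{Q_t}$. Apply the kernel-extraction theorem with the integer $c$. This yields the scale $k=n^{1/c}$, a kernel $K$ with $|K|\le n^{1-1/c}$, and a good subfamily $\calT'$ of mass at least $1-d/c$. On $\calT'$ the petals $P_t=Q_t\setminus K$ form an $(m,k)$-ranked-spread family, with $m$ of order $1$ or $d$.

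Fix an input $x$ and a kernel assignment $a\in\Sigma^K$. Let $\calA_a(x)\subseteq\calT'$ be the set of tests that reject on the hybrid $(a,x|_{[n]\setminus K})$. Then $\calA_a(x)$ is determined by $x$ outside $K$, so its membership on the sample can be checked once $P_t\subseteq W$. The new tester samples $W$ with density $p=\alpha_{\mathrm{wt}}/k$. For each $a$ it computes the unbiased estimator $X_a=\sum_{t\in\calA_a(x)}\mu(t)p^{-|P_t|}\one_{\{P_t\subseteq W\}}$. It accepts iff some $a$ has $X_a\le\gamma+g/2$. The expected sample size is $pn=\alpha_{\mathrm{wt}}n^{1-1/c}$, as required.

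Completeness: if $x\in\Pi$, take $a=x|_K$. Then $\mu(\calA_a(x))\le\gamma$. The variance bound from weighted concentration gives $\operatorname{Var}X_a\le \mu(\calA)\cdot m\sum_i\alpha^{-i}\le\gamma m/(\alpha-1)$, or the corresponding upper-tail form. By Chebyshev, or a Bernstein-type bound if available, $\Prb[X_a>\gamma+g/2]\le 1/3$ once $\alpha-1\gtrsim \gamma d/g^2$. This is where the $3\gamma$ term in $\alpha_{\mathrm{wt}}$ comes from.

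Soundness: let $x$ be $\eps$-far. The step I expect to be subtle is showing that every hybrid $y_a=(a,x_{\bar K})$ is $\eps/2$-far. This holds because $y_a$ differs from $x$ on at most $|K|\le n^{1-1/c}\le\eps n/2$ coordinates for large $n$. So $T$ rejects $y_a$ with probability at least $\delta$, and hence $\mu(\calA_a(x))\ge\delta-d/c=\beta=\gamma+g$. The lower-tail bound of weighted concentration, in the Janson form, gives $\Prb[X_a\le \mu(\calA)-g/2]\le\exp(-\Omega(g^2(\alpha-1)/(m\mu)))$. A union bound over the $|\Sigma|^{|K|}$ assignments then fails: $|K|$ can be $n^{1-1/c}$, so the per-assignment failure probability would have to be $\exp(-\Omega(|K|\log|\Sigma|))$.

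This is the main obstacle. I would handle it as in the one-sided proof, where $\alpha_{\mathrm{hit}}$ carries $\log|\Sigma|$. The failure exponent in the ranked theorems must scale with the extraction scale $B\ge|K|$, that is, $m$ is of order $d/B$ relative to mass normalized per kernel coordinate, as in the robust daisy discussion. The exponent then becomes $(\alpha-1)g^2B/(4d)$. Choosing $\alpha_{\mathrm{wt}}-1=\frac{4d}{g^2}\cdot 2\log(3|\Sigma|)$ makes the total failure $|\Sigma|^{|K|}e^{-2|K|\log(3|\Sigma|)}\le 1/3$. Concretely, I would invoke the robust-daisy form of the weighted theorem, which holds simultaneously for all subfamilies. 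I would check that its exponent is proportional to $(\alpha-1)B/d$ times the squared deviation and dominates $|K|\log|\Sigma|+\log 3$. The remaining work is bookkeeping of constants to match $\alpha_{\mathrm{wt}}$.
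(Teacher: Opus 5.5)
Your proposal is correct and is essentially the paper's proof. It uses the same extraction, the same importance-weighted scores $X_a$ with threshold $\gamma+g/2=(\beta+\gamma)/2$, and Chebyshev for completeness. For soundness it uses the weighted lower tail of \cref{cor:kernel-concentration}, where the ranked parameter $d/(\mu(D)B)$ from extraction makes the exponent scale with $B\ge|K|$, followed by a union bound over $\Sigma^K$.

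One bookkeeping correction is needed. With $\tau_a=\mu(\calA_a(x))\le1$ we have $(\tau_a-\theta)^2/\tau_a\ge g^2/4$, so the exponent is $(\alpha_{\mathrm{wt}}-1)Bg^2/(8d)$, not $(\alpha_{\mathrm{wt}}-1)Bg^2/(4d)$. With the stated $\alpha_{\mathrm{wt}}$ this still gives exactly $e^{B\log|\Sigma|-B\log(3|\Sigma|)}=3^{-B}\le1/3$, provided you use $B\ge1$ rather than $|K|$, which may be $0$.
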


Taking $d\le q$ gives the following convenient worst-case formulation.

\begin{corollary}
\label{cor:q-formss}
A one-sided $q$-query tester with detection probability $\delta$ admits a
one-sided sample-based simulation with expected sample complexity
$O_{q,\delta,|\Sigma|}
  \left(n^{1-1/(\lfloor q/\delta\rfloor+1)}\right)$.
More generally, a two-sided $q$-query tester with positive-input rejection
probability at most $\gamma$ and far-input rejection probability at least
$\delta$ admits a simulation with expected sample complexity
$O_{q,\gamma,\delta,|\Sigma|}
  \left(n^{1-1/(\lfloor q/(\delta-\gamma)\rfloor+1)}\right)$. 
For $\gamma=1/3$ and $\delta=2/3$, this is
$O_{q,|\Sigma|}\bigl(n^{1-1/(3q+1)}\bigr)
  =
  n^{1-\Theta(1/q)}.
$
\end{corollary}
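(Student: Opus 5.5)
The corollary follows from \Cref{thm:one-sided-main} and \Cref{thm:two-sided-main} by specializing the free parameters. In each case we choose the integer $c$ and then read off the sample complexity.

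\textbf{One-sided case.} Let $T$ be a one-sided $q$-query tester with detection probability $\delta$. Its average query complexity satisfies $d\le q$. If $d=0$, the tester is trivial and needs no samples. Otherwise, put $c=\lfloor q/\delta\rfloor+1$. This is an integer with $c>q/\delta\ge d/\delta$, so the hypothesis of \Cref{thm:one-sided-main} holds. For this $c$ we have
$$
\beta=\delta-\frac dc\ \ge\ \delta-\frac qc>0,
$$
and this lower bound depends only on $q$ and $\delta$. Hence
$$
\alpha_{\mathrm{hit}}\le 1+\frac{2q}{\delta-q/c}\,\log(3|\Sigma|),
$$
which is a constant depending on $q,\delta,|\Sigma|$. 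The theorem then gives expected sample complexity $\alpha_{\mathrm{hit}}n^{1-1/c}=O_{q,\delta,|\Sigma|}(n^{1-1/c})$ for all sufficiently large $n$. For the finitely many remaining $n$, reading the entire input costs $n=O(1)$ samples, so the bound holds for every $n$.

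\textbf{Two-sided case.} Put $c=\lfloor q/(\delta-\gamma)\rfloor+1$. Then $c>q/(\delta-\gamma)\ge d/(\delta-\gamma)$, so \Cref{thm:two-sided-main} applies. For this $c$,
$$
g=\delta-\gamma-\frac dc\ \ge\ \delta-\gamma-\frac qc>0,
$$
and this lower bound depends only on $q,\gamma,\delta$. Therefore $\alpha_{\mathrm{wt}}$ is bounded by a constant in $q,\gamma,\delta,|\Sigma|$. The theorem yields the claimed bound $O_{q,\gamma,\delta,|\Sigma|}(n^{1-1/c})$, again handling small $n$ trivially.

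\textbf{The case $\gamma=1/3$, $\delta=2/3$.} Here $\delta-\gamma=1/3$, so $c=\lfloor 3q\rfloor+1=3q+1$. This gives the exponent $1-1/(3q+1)=1-\Theta(1/q)$.

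There is no substantive obstacle in this argument. The only points to check are that the gaps $\beta$ and $g$ are bounded below by quantities independent of $n$ and of the particular $d\le q$, which the displayed inequalities establish, and that the small-$n$ cases are absorbed into the constant.
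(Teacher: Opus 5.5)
Your proposal is correct and matches the paper's proof. The paper likewise uses $d\le q$ and chooses $c=\lfloor q/\delta\rfloor+1$, respectively $c=\lfloor q/(\delta-\gamma)\rfloor+1$, which gives $c=3q+1$ when $\gamma=1/3$ and $\delta=2/3$. Your extra checks are details the paper leaves implicit: that $\beta$ and $g$, and hence $\alpha_{\mathrm{hit}}$ and $\alpha_{\mathrm{wt}}$, are bounded uniformly over $d\le q$, and that the case $d=0$ is trivial.
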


The distinction between the two conversions is important.  When $\gamma=0$,
the weighted theorem recovers the same exponent as the one-sided theorem, but
with a sampling constant proportional to the inverse square of the retained
rejection margin.  The one-sided argument uses perfect completeness and only
needs to expose one witness for each false kernel assignment.  The two-sided
argument has no such Boolean elimination rule and instead relies on the
importance-weighted estimator supplied by ranked concentration.  In this
sense, the two testing results illustrate the two basic probabilistic
consequences of ranked spreadness: hitting and mass estimation.

\subsection{Organization}

In \cref{sec:preliminaries}, we introduce the testing model and recall the
unweighted and weighted forms of Janson's inequality used throughout.  In
\cref{sec:ranked}, we define ranked spreadness and prove the width-free hitting
and weighted-concentration theorems.  In \cref{sec:extraction}, we prove ranked
kernel extraction, derive its hitting and concentration consequences, and
obtain the improved robust daisy lemma.  In \cref{sec:simulation}, we apply the
framework to one-sided and two-sided query-to-sample conversion.  

\subsection*{Acknowledgments}
The author is grateful to Guy Goldberg, Tom Gur, and Sidhant Saraogi for helpful discussions that clarified the relationship between the present work and theirs. She is especially grateful to Tom Gur for encouraging her to investigate applications of an earlier version of the main result, which led to a substantial strengthening of the paper. This work was supported by the CB European PhD Studentship funded by Trinity College, Cambridge.
\section{Preliminaries}
\label{sec:preliminaries}

Let $\Sigma$ be a finite alphabet.  For $x,y\in\Sigma^{n}$, their normalized
Hamming distance is
$$
  \dist(x,y)=\frac1n|\{i\in[n]:x_i\ne y_i\}|.
$$
For a property $\Pi\subseteq\Sigma^{n}$, write
$$
  \dist(x,\Pi)=\min_{y\in\Pi}\dist(x,y).
$$
The word $x$ is $\eps$-far from $\Pi$ if $\dist(x,\Pi)>\eps$.

A deterministic \emph{local test} is a pair
$t=(Q_t,\varphi_t),
$
where $Q_t\subseteq[n]$ and
$\varphi_t:\Sigma^{Q_t}\longrightarrow\{0,1\}$.
We interpret $\varphi_t=1$ as rejection.  A finite probability distribution
$\mu$ on a set $\calT$ of local tests defines a nonadaptive randomized tester:
it samples $t\sim\mu$, queries $Q_t$, and outputs $\varphi_t(x|_{Q_t})$.
Its rejection probability on $x$ is
$r_T(x)=
  \mu\bigl(\{t\in\calT:\varphi_t(x|_{Q_t})=1\}\bigr)$. The maximum and average query complexities are, respectively,
$q=\max_{t\in\supp(\mu)}|Q_t|$ and $d=\E_{t\sim\mu}|Q_t|$.

Every finite-randomness nonadaptive tester has this form after including all
random coins in the index $t$.  More generally, since there are only finitely
many pairs consisting of a query set of size at most $q$ and a deterministic
predicate on that set, arbitrary internal randomness can be grouped into a
finite distribution of local tests.

The tester is \emph{one-sided} for $\Pi$ if
$r_T(x)=0$ for every $x\in\Pi$.
Equivalently, every rejecting local view of every positive-probability local
test is inconsistent with membership in $\Pi$. For $p\in[0,1]$, let $W\sim\Bin([n],p)$ denote the random set obtained by
retaining each coordinate independently with probability $p$.  A
\emph{$p$-sampling tester} receives $(W,x|_W)$ and applies arbitrary
post-processing.  Its expected sample complexity is $pn$.

This is the sample-based model of Goldreich and Ron
\cite{goldreichron2016sample}.  We do not impose a running-time bound on the
post-processing.

The combinatorial arguments use the query sets but must retain the indexing. In particular, two local tests may use the same query set and different predicates.  Thus we
work with a probability space $(\calT,\mu)$ and a set $P_t\subseteq U$
attached to every index $t\in\calT$.  For $x\in U$, write
$\mu(x)=\mu\bigl(\{t\in\calT:x\in P_t\}\bigr)$.
For $\calA\subseteq\calT$, we use $\mu(\calA)$ for its probability mass.

We use the following standard form of Janson's inequality; see, for example,
\cite[Chapter~8]{alonspencer2016}.

\begin{lemma}[Janson]
\label{lem:janson}
Let $\calM$ be a finite family of subsets of a finite ground set, and let
$W$ contain each ground-set element independently with probability $p$.
For $M\in\calM$, let $Z_M=\one_{\{M\subseteq W\}}$, let
$Z=\sum_{M\in\calM}Z_M$, $ \lambda=\E Z$, and define the ordered dependency sum
$$
  \Delta=
  \sum_{\substack{M,N\in\calM:\ M\ne N,\ M\cap N\ne\emptyset}}
  \E[Z_MZ_N].
$$
Then
$$
  \Prb[Z=0]
  \le
  \exp\left(
    -\min\left\{
      \frac\lambda2,
      \frac{\lambda^2}{2\Delta}
    \right\}
  \right),
$$
with the second term interpreted as $+\infty$ when $\Delta=0$.
\end{lemma}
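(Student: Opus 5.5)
The statement is the standard Janson inequality, and I would prove it by the usual route: bound $-\log\Prb[Z=0]$ through a telescoping product of conditional probabilities. Then I would optimize a second, ``thinned'' version of the same estimate to get the $\lambda^2/(2\Delta)$ branch.

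\emph{First bound.} Enumerate $\calM=\{M_1,\dots,M_s\}$ and write
$$
\Prb[Z=0]=\prod_{i=1}^s\Prb\bigl[Z_{M_i}=0\mid Z_{M_j}=0\ \forall j<i\bigr].
$$
Fix $i$. Split the earlier indices into those $j$ with $M_j\cap M_i\ne\emptyset$ (event $D$: none of these is contained in $W$) and the rest (event $C$). Then
$$
\Prb[M_i\subseteq W\mid C\cap D]\ \ge\ \Prb[M_i\subseteq W\cap D\mid C].
$$
By independence, $\Prb[M_i\subseteq W\mid C]=\E Z_{M_i}$. The events $\{M_i\subseteq W\}$ and $\{M_j\subseteq W\}$ are increasing, while $C$ is decreasing. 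Conditioning on $M_i\subseteq W$ fixes coordinates disjoint from those determining $C$, so Harris/FKG gives
$$
\Prb[M_j\subseteq W\mid M_i\subseteq W,\,C]\le\Prb[M_j\subseteq W\mid M_i\subseteq W].
$$
Together these yield
$$
\Prb[M_i\subseteq W\mid\text{earlier fail}]\ \ge\ \E Z_{M_i}-\sum_{j<i,\,M_j\cap M_i\ne\emptyset}\E[Z_{M_i}Z_{M_j}].
$$
Using $1-x\le e^{-x}$ and summing over $i$ gives $\Prb[Z=0]\le\exp(-\lambda+\Delta/2)$. The factor $1/2$ appears because each unordered intersecting pair is counted once, while $\Delta$ counts ordered pairs. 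This Harris-inequality step is the main point needing care: I must verify that $C$ depends only on coordinates outside $M_i$ while the other events are monotone. Everything else is bookkeeping.

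\emph{Deriving the minimum.} If $\Delta\le\lambda$, the first bound gives $\exp(-\lambda/2)$. If $\Delta>\lambda$, apply the first bound to a random subfamily $\calM'$ that keeps each $M$ independently with probability $\theta=\lambda/\Delta\in(0,1)$. The event $Z=0$ implies that $\calM'$ has no member in $W$. The first bound holds for every fixed subfamily, and for $\calM'$ its exponent $\lambda'-\Delta'/2$ has expectation $\theta\lambda-\theta^2\Delta/2$. So some realization of $\calM'$ has exponent at least this expectation. Plugging in $\theta=\lambda/\Delta$ gives $\lambda^2/(2\Delta)$.

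Combining the two cases: since $\lambda^2/(2\Delta)\ge\lambda/2$ exactly when $\Delta\le\lambda$, the bound in each case is at least $\exp\bigl(-\min\{\lambda/2,\lambda^2/(2\Delta)\}\bigr)$'s exponent, and the stated inequality follows. The case $\Delta=0$ falls into the first case and reads $\exp(-\lambda)\le\exp(-\lambda/2)$.
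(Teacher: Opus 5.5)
Your proof is correct, and it is the standard Alon--Spencer argument: the bound $\Prb[Z=0]\le e^{-\lambda+\Delta/2}$ via the telescoping product and Harris, followed by random thinning with $\theta=\lambda/\Delta$ when $\Delta>\lambda$. The paper does not reprove this lemma and simply cites Chapter~8 of Alon and Spencer for it. One point is worth making explicit: if $\calM$ may contain repeated sets (as in the multiset $\calM_L$ used later in the paper), the condition $M\ne N$ must range over distinct indices, and your enumeration $M_1,\dots,M_s$ and the computation $\E\Delta'=\theta^2\Delta$ already handle this correctly.
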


\begin{lemma}[Weighted lower-tail Janson]
\label{lem:weighted-janson}
Let $(A_i)_{i\in I}$ be a finite indexed family of nonempty subsets of a finite
ground set, let $W$ include every ground-set element independently with
probability $p$, and let $w_i\ge0$.  Let
$I_i=\one_{\{A_i\subseteq W\}}$, $X=\sum_{i\in I}w_iI_i$, $\lambda=\E X,
$
and define the ordered dependency sum, including the diagonal, by
$$
  \overline\Delta
  =
  \sum_{\substack{i,j\in I\\A_i\cap A_j\ne\emptyset}}
  w_iw_j\E[I_iI_j].
$$
Then, for every $\eta\in[0,1]$,
\begin{equation}
\label{eq:weighted-janson}
  \Prb[X\le(1-\eta)\lambda]
  \le
  \exp\left(
    -\frac{\eta^2\lambda^2}{2\overline\Delta}
  \right),
\end{equation}
with the natural interpretation when $\overline\Delta=0$.
\end{lemma}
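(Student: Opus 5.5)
We prove the weighted lower-tail bound of \cref{lem:weighted-janson} by the exponential-moment method, following the standard proof of Janson's inequality through the derivative of the Laplace transform. For $s\ge0$ set $\Psi(s)=\E e^{-sX}$. By Markov's inequality,
$$
  \Prb[X\le(1-\eta)\lambda]\le e^{s(1-\eta)\lambda}\Psi(s),
$$
so it suffices to show
$$
  \log\Psi(s)\le -s\lambda+\frac{s^2}{2}\overline\Delta .
$$
Choosing $s=\eta\lambda/\overline\Delta$ then gives the exponent $-s\eta\lambda+\tfrac{s^2}{2}\overline\Delta=-\eta^2\lambda^2/(2\overline\Delta)$. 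If $\overline\Delta=0$, then every diagonal term $w_i^2\E I_i$ vanishes, so $\lambda=0$, and the event is $X\le 0$, which has probability $1$ (the natural interpretation). Note that $\eta\le1$ is not even needed in this route.

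To bound $\Psi$, differentiate:
$$
  -\frac{d}{ds}\log\Psi(s)=\frac{\E[Xe^{-sX}]}{\E e^{-sX}}=\sum_i w_i\,\frac{\E[I_ie^{-sX}]}{\Psi(s)}.
$$
For fixed $i$, split $X=Y_i+Z_i$, where $Y_i=\sum_{j:A_j\cap A_i\ne\emptyset}w_jI_j$ (this includes $j=i$) and $Z_i$ collects the remaining terms. Conditioned on $I_i=1$ (equivalently $A_i\subseteq W$), $Z_i$ is a function only of coordinates outside $A_i$, so its conditional law equals its unconditional law. Write $\E_i$ for expectation conditional on $I_i=1$. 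Since $e^{-x}\ge 1-x$,
$$
  \E[I_ie^{-sX}]=\E[I_i]\,\E_i\bigl[e^{-sY_i}e^{-sZ_i}\bigr]\ge \E[I_i]\,\E_i\bigl[(1-sY_i)e^{-sZ_i}\bigr].
$$

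The two resulting terms are handled separately. For the first, $\E_i e^{-sZ_i}=\E e^{-sZ_i}\ge\Psi(s)$ because $Z_i\le X$. For the second, we need $\E_i[Y_ie^{-sZ_i}]\le \E_i[Y_i]\,\E_i[e^{-sZ_i}]$. Conditional on $I_i=1$, the measure is still a product measure on the remaining coordinates. Under it, $Y_i$ is increasing in $W$ and $e^{-sZ_i}$ is decreasing in $W$, so Harris/FKG gives exactly this negative correlation. Combined with $\E_i e^{-sZ_i}\ge\Psi(s)$, this yields
$$
  \frac{\E[I_ie^{-sX}]}{\Psi(s)}\ge \E[I_i]\bigl(1-s\,\E_i[Y_i]\bigr).
$$
The step that needs care is the direction of the inequalities. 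After multiplying by $-s<0$, the FKG bound runs in the right direction. Also, $\E_i[e^{-sZ_i}]/\Psi(s)\ge1$ multiplies $(1-s\E_iY_i)$, which may be negative. To handle this, we instead write the bound as
$$
  \frac{\E[I_ie^{-sX}]}{\Psi(s)}\ge \E[I_i]\frac{\E e^{-sZ_i}}{\Psi(s)}-s\,\E[I_i]\,\E_i[Y_i]\frac{\E e^{-sZ_i}}{\Psi(s)},
$$
and this ratio is where a naive approach breaks. The cleaner route, as in Alon--Spencer, is to bound $\E[I_ie^{-sX}]\ge \E[I_i]\,\E_i[e^{-sY_i}]\,\E[e^{-sZ_i}]$ using FKG, since both $e^{-sY_i}$ and $e^{-sZ_i}$ are decreasing. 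Then $\E e^{-sZ_i}\ge\Psi(s)$, and Jensen gives $\E_i e^{-sY_i}\ge e^{-s\E_iY_i}\ge 1-s\E_iY_i$. Everything is now positive and there is no sign issue. Hence
$$
  -\frac{d}{ds}\log\Psi(s)\ge \sum_i w_i\E[I_i]-s\sum_i w_i\E[I_i]\,\E_i[Y_i]=\lambda-s\overline\Delta,
$$
since
$$
  \sum_i w_i\E[I_i]\,\E_i[Y_i]=\sum_{i,j:\,A_i\cap A_j\ne\emptyset}w_iw_j\E[I_iI_j]=\overline\Delta.
$$
Integrating from $0$ to $s$ with $\log\Psi(0)=0$ gives $\log\Psi(s)\le -s\lambda+s^2\overline\Delta/2$, which completes the argument.
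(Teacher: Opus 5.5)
Your proof is correct, but it takes a different route from the paper's.

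\textbf{The paper's route.} The paper treats the unweighted lower-tail Janson inequality as a black box. For rational weights it replaces each $A_i$ by $Nw_i$ indexed copies. Then the number of sampled copies is $NX$, its mean is $N\lambda$, and the diagonal-inclusive dependency sum is $N^2\overline\Delta$, so the factors of $N$ cancel. Real weights are then handled by rational approximation, with a small slack in the threshold.

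\textbf{Your route.} You rerun the standard Laplace-transform proof of the lower tail with the weights in place. You correctly identify what that proof actually uses: $Y_i$ and $Z_i$ are nonnegative and increasing, $Z_i\le X$, and $Z_i$ is independent of the event $A_i\subseteq W$. All of these survive nonnegative real weights.

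\textbf{What each buys.} The paper's version is shorter given the black box. However, it relies on that inequality for indexed families with repeated sets, and it needs a limiting step. Yours is self-contained and needs no approximation. It also gives the slightly stronger deviation form $\Prb[X\le\lambda-t]\le\exp(-t^2/(2\overline\Delta))$ for all $t\ge0$, which is why $\eta\le1$ plays no role, as you note.

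\textbf{Two minor points.} First, your first attempt did not actually fail. When $1-s\E_i[Y_i]<0$, the needed inequality $\E[I_ie^{-sX}]/\Psi(s)\ge\E[I_i](1-s\E_i[Y_i])$ holds trivially, because its left side is nonnegative. The Harris-plus-Jensen route you settle on is cleaner anyway. Second, conditioning on $I_i=1$ requires $\E[I_i]>0$. This holds for $p>0$ because the $A_i$ are nonempty, and the case $p=0$ is covered by your $\overline\Delta=0$ case.
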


\begin{proof}
Suppose first that every $w_i$ is rational.  Choose $N$ so that $Nw_i$ is an
integer for all $i$, and replace $A_i$ by $Nw_i$ indexed copies.  If $Y$ is
the number of sampled copies, then $Y=NX$, and $\E Y=N\lambda$.
The ordered dependency sum for the copied family, including the diagonal, is
$N^2\overline\Delta$.  The standard lower-tail Janson inequality therefore
gives
$$
  \Prb[X\le(1-\eta)\lambda]
  =
  \Prb[Y\le(1-\eta)\E Y]
  \le
  \exp\left(
    -\frac{\eta^2N^2\lambda^2}{2N^2\overline\Delta}
  \right).
$$
For arbitrary nonnegative weights, approximate the weights by nonnegative
rationals.  Since the index set is finite, the random variables, their means,
and their dependency sums converge uniformly; passing to the limit, with an
arbitrarily small slack in the threshold, gives
\eqref{eq:weighted-janson}.
\end{proof}

\section{Ranked spreadness and random hitting}
\label{sec:ranked}
We start by introducing the notion of \emph{ranked spreadness}.
\begin{definition}[Ranked spreadness]
\label{def:ranked-spread}
Let $m,k>0$.  A weighted indexed set system
$(P_t)_{t\in\calT}$ with distribution $\mu$ is
\emph{$(m,k)$-ranked-spread} if every nonempty $P_t$ admits an ordering
$P_t=\{x_1(t),\ldots,x_{|P_t|}(t)\}
$
such that, for every $i$,
$\mu\bigl(\{s\in\calT:x_i(t)\in P_s\}\bigr)
  \le
  m k^{-i}.$
\end{definition}

In the normalization used here, a weighted indexed set system is
\emph{$(m,k)$-spread} if
$\mu\bigl(\{s\in\calT:A\subseteq P_s\}\bigr)
  \le m k^{-|A|}$
for every nonempty $A\subseteq U$.  This is the usual spread condition from
the robust-sunflower and expectation-threshold literature
\cite{alweiss2021improved,rao2020coding,frankston2021thresholds}.

Ranked spreadness implies ordinary spreadness with the same parameters.  To
see this, suppose that $A\subseteq P_t$, and let $x_i(t)$ be the element of
$A$ having largest rank in the ranked-spread ordering of $P_t$.  Since $A$
contains $|A|$ elements, $i\ge |A|$, and hence
\begin{align*}
  \mu\bigl(\{s:A\subseteq P_s\}\bigr)
  \le \mu\bigl(\{s:x_i(t)\in P_s\}\bigr)\le m k^{-i}
  \le m k^{-|A|}.
\end{align*}
Thus ranked spreadness is a genuine strengthening of ordinary spreadness.

For an arbitrary spread system whose members have size at most $r$, the
general spread lemma gives random containment at density of order
$(\log r)/k$; equivalently, the general expectation-threshold theory retains
a logarithmic dependence on the largest minimal witness
\cite{frankston2021thresholds,parkpham2024proof,mossel2025bayesian}.  The
additional edgewise hierarchy is what allows \cref{thm:ranked-hitting} to work
at every density $\alpha/k$ with $\alpha>1$, with an exponential failure
bound independent of $r$.  This hierarchy is distinct from other multiscale
refinements of spreadness, such as Spiro's smoother spread condition
\cite{spiro2023smoother}, which controls codegrees across prescribed ranges of
intersection sizes.

The ordering may depend on $t$; there need not be a globally consistent rank
assigned to each coordinate. For $r\ge1$ and $\alpha>0$, let
$H_r(\alpha)=\sum_{i=1}^{r}\alpha^{-i}$.

\begin{theorem}[Ranked spread-to-hitting]
\label{thm:ranked-hitting}
Let $(P_t)_{t\in\calT}$ be an $(m,k)$-ranked-spread indexed set system, and
suppose that $|P_t|\le r$ for every $t$.  Let
$\calA\subseteq\calT$, put $\tau=\mu(\calA)$, and let
$W\sim\Bin(U,p)$, and $p=\frac\alpha k\le1$. Then
\begin{equation}
\label{eq:ranked-general}
  \Prb_W\bigl[\nexists t\in\calA:\ P_t\subseteq W\bigr]
  \le
  \exp\left(
    -\frac{\tau}{2mH_r(\alpha)}
  \right).
\end{equation}
If $\alpha>1$, then
\begin{equation}
\label{eq:ranked-supercritical}
  \Prb_W\bigl[\nexists t\in\calA:\ P_t\subseteq W\bigr]
  \le
  \exp\left(
    -\frac{(\alpha-1)\tau}{2m}
  \right).
\end{equation}
\end{theorem}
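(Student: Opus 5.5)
The plan is to apply the weighted lower-tail Janson inequality (\cref{lem:weighted-janson}) with $\eta=1$ to the importance-weighted count of exposed members of $\calA$. If some $t\in\calA$ has $P_t=\emptyset$, then $P_t\subseteq W$ always holds and the failure probability is $0$. So we may assume every $P_t$ with $t\in\calA$ is nonempty. For $t\in\calA$ take $A_t=P_t$ and weight $w_t=\mu(t)p^{-|P_t|}$, and set $X=\sum_{t\in\calA}w_t\one_{\{P_t\subseteq W\}}$. Then $\lambda=\E X=\sum_{t\in\calA}\mu(t)=\tau$. The failure event is exactly $\{X=0\}=\{X\le 0\cdot\lambda\}$, so
$$
\Prb[\nexists t\in\calA: P_t\subseteq W]\le\exp\bigl(-\tau^2/(2\overline\Delta)\bigr).
$$
Everything therefore reduces to proving $\overline\Delta\le \tau\, m H_r(\alpha)$.

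Next we compute the dependency sum. For $s,t$ with $P_s\cap P_t\ne\emptyset$, including $s=t$,
$$
w_sw_t\E[I_sI_t]=\mu(s)\mu(t)p^{-|P_s|-|P_t|}p^{|P_s\cup P_t|}=\mu(s)\mu(t)p^{-|P_s\cap P_t|}.
$$
Hence
$$
\overline\Delta=\sum_{t\in\calA}\mu(t)\sum_{s\in\calA:\,P_s\cap P_t\ne\emptyset}\mu(s)\,p^{-|P_s\cap P_t|}.
$$
The key step is the charging argument. Fix $t$ with its ranked-spread ordering $x_1(t),\dots,x_{|P_t|}(t)$. Assign each intersecting $s$ to the index $i=i(s)$ of the largest-ranked element of $P_t$ that lies in $P_s$. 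Then $P_s\cap P_t\subseteq\{x_1(t),\dots,x_i(t)\}$, so $|P_s\cap P_t|\le i$. Since $p\le1$, this gives $p^{-|P_s\cap P_t|}\le p^{-i}$.

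The $s$ assigned to index $i$ all satisfy $x_i(t)\in P_s$. By \cref{def:ranked-spread}, their total mass is at most $mk^{-i}$; enlarging from $s\in\calA$ to all $s\in\calT$ only helps. Each $s$ is assigned to exactly one $i\le |P_t|\le r$. Therefore the inner sum is at most $\sum_{i=1}^{r} mk^{-i}p^{-i}=m\sum_{i=1}^r\alpha^{-i}=mH_r(\alpha)$. Summing over $t$ with weights $\mu(t)$ gives $\overline\Delta\le \tau m H_r(\alpha)$. Substituting yields \eqref{eq:ranked-general}.

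For \eqref{eq:ranked-supercritical}, when $\alpha>1$ we have $H_r(\alpha)\le\sum_{i\ge1}\alpha^{-i}=1/(\alpha-1)$, which gives the bound directly. The only step requiring real care is the charging step. The point is that the $p^{-|P_s\cap P_t|}$ blow-up must be bounded by the rank $i$ of the charged element rather than by $r$. This works because all elements of the intersection have rank at most $i$; this is exactly where the last-ranked choice matters. One should also confirm that the diagonal $s=t$, which \cref{lem:weighted-janson} includes, is covered. It is: $s=t$ is charged to $i=|P_t|$, and the mass bound there still holds.
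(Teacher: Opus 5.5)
Your proof is correct, but it is not the route the paper takes for this theorem. It is precisely the paper's proof of \cref{thm:weighted-ranked} specialized to $\eta=1$, combined with the inclusion $\{\nexists t\in\calA: P_t\subseteq W\}\subseteq\{X=0\}$. This is an inclusion rather than the equality you state if $\calA$ contains zero-mass indices, but it is the direction you need.

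For the hitting theorem itself, the paper uses only the unweighted \cref{lem:janson}. It conditions on $\calA$ to obtain an $(m/\tau,k)$-ranked-spread system and approximates $\mu$ by rational weights. It then replicates each index with multiplicity proportional to $L\mu(t)$ and pads every copy with private dummy elements to a common size $r_0$. With all members of equal size, the normalized dependency sum $\Delta/\lambda^2$ of the plain count is the discrete analogue of your $\overline\Delta/\tau^2$, minus the diagonal. The same last-ranked-element charging gives $\Delta\le\lambda^2 mH_r(\alpha)$, with $m/\tau$ in place of $m$. Letting $L\to\infty$ then removes the $\lambda/2$ branch of the minimum in \cref{lem:janson}.

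Your version replaces this padding, copying and limiting machinery by \cref{lem:weighted-janson}. That lemma's own proof in the paper also uses copying and rational approximation, so the underlying work is comparable.

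What your route gains is a single short computation. The diagonal is handled directly by the charging: $s=t$ is charged to rank $|P_t|$, using $\mu(t)\le mk^{-|P_t|}$. It also makes explicit that hitting is the $\eta=1$ case of weighted concentration, with no loss in the constant. The paper's route has the modest advantage of relying only on the textbook unweighted Janson inequality with the off-diagonal dependency sum. There the diagonal is absorbed by the $L\to\infty$ limit.
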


\begin{proof}
If $\tau=0$, there is nothing to prove.  If $P_t=\emptyset$ for some
$t\in\calA$, the event on the left has probability zero.  We may therefore
assume that every set indexed by $\calA$ is nonempty.

Condition on $\calA$.  The resulting distribution $\mu_{\calA}$ is
$(m/\tau,k)$-ranked-spread, using the same edgewise orderings.  It is therefore
enough to prove the theorem when $\calA=\calT$ and then replace $m$ by
$m/\tau$.

We first suppose that all values $\mu(t)$ are rational.  Choose a multiset
$\calM_0$ in which the proportion of copies of each index $t$ is $\mu(t)$.
Let
$r_0=\max_{t\in\calT}|P_t|\le r$.

For an integer $L\ge1$, replace every member of $\calM_0$ by $L$ copies.  Pad
each copy of $P_t$ to size $r_0$ by adding $r_0-|P_t|$ fresh dummy elements,
used only by that copy.  Let $\calM_L$ be the resulting uniform multiset on
the enlarged ground set.  The dummy elements are mutually distinct and do
not belong to $U$.

Sample every element of the enlarged ground set independently with
probability $p$.  If a padded set is sampled, then its original set is
contained in $W\cap U$.  Hence
\begin{equation}
\label{eq:padding-domination}
  \Prb_W[\nexists t:\ P_t\subseteq W]
  \le
  \Prb[\text{no member of }\calM_L\text{ is sampled}].
\end{equation}

Let $N_L=|\calM_L|$ and, for $M\in\calM_L$, let $Z_M$ be the indicator that
$M$ is sampled.  Then
$$
  \lambda=\E\sum_{M\in\calM_L}Z_M=N_Lp^{r_0}.
$$
Fix $M\in\calM_L$, and order its original elements according to the
ranked-spread ordering of the corresponding $P_t$:
$x_1(M),\ldots,x_{|P_t|}(M)$.
If another padded member $N$ intersects $M$, their intersection contains no
dummy element.  Let
$i(M,N)=
  \max\{i:x_i(M)\in N\}$.
Then $|M\cap N|\le i(M,N)$.  Moreover, the number of $N\in\calM_L$ with
$i(M,N)=i$ is at most the number containing $x_i(M)$, and hence at most
$N_L m k^{-i}$.
Consequently, the ordered Janson dependency sum satisfies
\begin{align}
  \Delta
  \le
  \sum_{M\in\calM_L}
  \sum_{i=1}^{r_0}
  N_L m k^{-i}p^{2r_0-i}
=
  N_L^2p^{2r_0}m
  \sum_{i=1}^{r_0}(pk)^{-i}
  =
  \lambda^2mH_{r_0}(\alpha)
  \le
  \lambda^2mH_r(\alpha).
\label{eq:ranked-delta}
\end{align}
By \cref{lem:janson},
$$
  \Prb[Z=0]
  \le
  \exp\left(
    -\min\left\{
      \frac{N_Lp^{r_0}}2,
      \frac{1}{2mH_r(\alpha)}
    \right\}
  \right).
$$
The first term tends to infinity as $L\to\infty$.  Combining this with
\eqref{eq:padding-domination} gives
$$
  \Prb_W[\nexists t:\ P_t\subseteq W]
  \le
  \exp\left(-\frac{1}{2mH_r(\alpha)}\right).
$$

For a general finite distribution, approximate its atom weights by positive
rational weights with the same support.  The coordinate marginals converge,
so the approximating systems are $(m+o(1),k)$-ranked-spread.  The event to be
bounded depends only on the indexed support and the sets $P_t$, not on the
weights.  Letting the approximation error tend to zero proves
\eqref{eq:ranked-general}.

Finally, if $\alpha>1$, then
$$
  H_r(\alpha)
  =\sum_{i=1}^{r}\alpha^{-i}
  \le\frac1{\alpha-1},
$$
which gives \eqref{eq:ranked-supercritical}.
\end{proof}

Observe that, at the critical density $p=1/k$, the theorem gives an exponent of order
$1/(mr)$.  Further, for $p=\alpha/k$ with $\alpha>1$, the dependence on $r$ disappears.

\subsection{Weighted ranked concentration}
\label{subsec:weighted-ranked}

The hitting theorem only records whether some petal is exposed.  For two-sided testing we need to estimate the total mass of a rejecting subfamily.  The following importance-weighted version provides exactly this lower-tail control.

\begin{theorem}[Weighted ranked concentration]
\label{thm:weighted-ranked}
Let $(P_t)_{t\in\calT}$ be an $(m,k)$-ranked-spread indexed set system with
distribution $\mu$, and suppose that $|P_t|\le r$ for every $t$.  Let
$\calA\subseteq\calT$, put $\tau=\mu(\calA)$, and let
$W\sim\Bin(U,p)$, with $p=\frac{\alpha}{k}\le1.
$
Define $X_{\calA}
  =
  \sum_{t\in\calA}
  \mu(t)p^{-|P_t|}\one_{\{P_t\subseteq W\}}$.

Then, $\E X_{\calA}=\tau$, $\operatorname{Var}(X_{\calA})
  \le
  \tau mH_r(\alpha)$, and, for every $\eta\in[0,1]$,
\begin{equation}
\label{eq:ranked-lower-tail}
  \Prb\bigl[X_{\calA}\le(1-\eta)\tau\bigr]
  \le
  \exp\left(
    -\frac{\eta^2\tau}{2mH_r(\alpha)}
  \right).
\end{equation}
\end{theorem}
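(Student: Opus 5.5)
We apply \cref{lem:weighted-janson} directly to the indexed family $(P_t)_{t\in\calA}$ with weights $w_t=\mu(t)p^{-|P_t|}$. No padding is needed, because the weights already normalise each indicator. As in the proof of \cref{thm:ranked-hitting}, we first dispose of empty sets. If $P_t=\emptyset$, then the summand $\mu(t)\one_{\{P_t\subseteq W\}}=\mu(t)$ is deterministic. Such summands contribute their full mass to $X_{\calA}$ and nothing to the variance, so they can only help the lower tail. Concretely, we write $X_{\calA}=\tau_0+X'$, where $\tau_0$ is the empty mass, and apply the argument to $X'$ with $\tau'=\tau-\tau_0$. The event $X_{\calA}\le(1-\eta)\tau$ implies $X'\le(1-\eta)\tau'-\eta\tau_0\le(1-\eta)\tau'$, and the resulting bound in terms of $\tau'$ is weaker than needed. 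So I would instead note that, for the final exponent, it suffices to show $\overline\Delta\le\tau\, mH_r(\alpha)$ for the nonempty part, with $\lambda'$ substituted appropriately. The cleanest route is to bound the tail of $X'$ at threshold $\tau'-\eta\tau$, that is, at $\eta'=\eta\tau/\tau'$ when this is at most $1$; the case $\eta\tau>\tau'$ is trivial. This yields the exponent $\eta^2\tau^2/(2\tau' mH)\ge\eta^2\tau/(2mH)$.

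Unbiasedness is immediate: $\E\one_{\{P_t\subseteq W\}}=p^{|P_t|}$, so $\E X_{\calA}=\sum_{t\in\calA}\mu(t)=\tau$.

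The key step is the dependency sum. For ordered pairs $(s,t)$ in $\calA$ with $P_s\cap P_t\neq\emptyset$, we have $\E[I_sI_t]=p^{|P_s|+|P_t|-|P_s\cap P_t|}$, so
$$
  w_sw_t\E[I_sI_t]=\mu(s)\mu(t)p^{-|P_s\cap P_t|}.
$$
Fix $t$, and charge each such $s$ to $i(t,s)=\max\{i:x_i(t)\in P_s\}$, exactly as in \eqref{eq:ranked-delta}. Then $|P_s\cap P_t|\le i$, so $p^{-|P_s\cap P_t|}\le p^{-i}$ because $p\le1$. By ranked spreadness, the $\mu$-mass of those $s$ with $i(t,s)=i$ is at most $\mu(\{s: x_i(t)\in P_s\})\le mk^{-i}$; note that we use the marginal of the full $\mu$, not of $\mu$ conditioned on $\calA$. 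Hence
$$
  \overline\Delta\le\sum_{t\in\calA}\mu(t)\sum_{i=1}^{r} mk^{-i}p^{-i}=\tau\,mH_r(\alpha).
$$
The diagonal is included automatically, since $s=t$ is charged to $i=|P_t|$.

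The variance bound follows because $\operatorname{Var}(X)=\sum_{s,t}w_sw_t(\E[I_sI_t]-\E I_s\E I_t)$. Disjoint pairs contribute zero by independence, and intersecting pairs contribute at most their $w_sw_t\E[I_sI_t]$ term, so $\operatorname{Var}(X)\le\overline\Delta$. For the tail, \cref{lem:weighted-janson} with $\lambda=\tau$ gives the exponent $\eta^2\tau^2/(2\overline\Delta)\ge\eta^2\tau/(2mH_r(\alpha))$, which is \eqref{eq:ranked-lower-tail}.

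I expect the only real subtlety to be the bookkeeping above: ensuring that the charging uses the unconditioned marginals, so that the factor is $\tau$ rather than $\tau^2$ in the denominator, and handling empty petals. Everything else is a direct substitution.
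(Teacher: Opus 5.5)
Your proposal is correct and follows the paper's proof essentially step for step. It uses the same reduction to nonempty petals via the rescaled deviation $\eta\tau/\tau_+$, the same charging of each intersecting $s$ to the last-ranked element $x_{i(t,s)}(t)$ of $P_t$ lying in $P_s$ (using the unconditioned marginals) to get $\overline\Delta\le\tau mH_r(\alpha)$, and then the covariance bound for the variance and \cref{lem:weighted-janson} for the tail.
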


\begin{proof}
Indices with $P_t=\emptyset$ contribute deterministically to
$X_{\calA}$.  Let their total mass be $\tau_0$, and put
$\tau_+=\tau-\tau_0$.  If $(1-\eta)\tau<\tau_0$, then the event in
\eqref{eq:ranked-lower-tail} is empty.  Otherwise it is a lower-tail event for
the positive-petal contribution with relative deviation
$\eta_+=\frac{\eta\tau}{\tau_+}\le1$.
Since
$\eta_+^2\tau_+
  ={\eta^2\tau^2}/{\tau_+}
  \ge\eta^2\tau$,
it is enough to treat the case in which every $P_t$, $t\in\calA$, is
nonempty.

For $t\in\calA$, set
$I_t=\one_{\{P_t\subseteq W\}}$, and $w_t=\mu(t)p^{-|P_t|}$. Then $X_{\calA}=\sum_{t\in\calA}w_tI_t$, and
$$
  \E[w_tI_t]
  =\mu(t)p^{-|P_t|}p^{|P_t|}
  =\mu(t),
$$
which proves that $\E X_{\calA}=\tau$.

Define
\begin{align}
  \Lambda_{\calA}
  &:={}
  \sum_{\substack{t,s\in\calA\\P_t\cap P_s\ne\emptyset}}
  w_tw_s\E[I_tI_s]=
  \sum_{\substack{t,s\in\calA\\P_t\cap P_s\ne\emptyset}}
  \mu(t)\mu(s)p^{-|P_t\cap P_s|}.
\label{eq:lambda-dependency}
\end{align}
Fix $t\in\calA$, and use a ranked-spread ordering
$P_t=\{x_1(t),\ldots,x_{|P_t|}(t)\}.
$
For every $s$ with $P_s\cap P_t\ne\emptyset$, let
$i(t,s)=\max\{i:x_i(t)\in P_s\}$.
Then $|P_t\cap P_s|\le i(t,s)$.  Since $p\le1$,
$p^{-|P_t\cap P_s|}\le p^{-i(t,s)}$. Moreover,
$$
  \sum_{\substack{s\in\calA\\i(t,s)=i}}\mu(s)
  \le
  \mu\bigl(\{s:x_i(t)\in P_s\}\bigr)
  \le mk^{-i}.
$$
Consequently,
\begin{align}
  \sum_{\substack{s\in\calA\\P_t\cap P_s\ne\emptyset}}
  \mu(s)p^{-|P_t\cap P_s|}
  \le
  m\sum_{i=1}^{|P_t|}k^{-i}p^{-i}
  \le mH_r(\alpha).
\label{eq:one-sided-dependency}
\end{align}
Multiplying by $\mu(t)$ and summing over $t\in\calA$ gives
\begin{equation}
\label{eq:dependency-final}
  \Lambda_{\calA}
  \le
  \tau mH_r(\alpha).
\end{equation}

If $P_t\cap P_s=\emptyset$, then $I_t$ and $I_s$ are independent.  For an
intersecting pair, $\operatorname{Cov}(I_t,I_s)\le\E[I_tI_s]$. It follows from \eqref{eq:dependency-final} that
$\operatorname{Var}(X_{\calA})
  \le\Lambda_{\calA}
  \le\tau mH_r(\alpha),
$
proving $ \operatorname{Var}(X_{\calA})
  \le
  \tau mH_r(\alpha)$.

Applying \cref{lem:weighted-janson} with the weights $w_t$ and using
\eqref{eq:dependency-final}, we obtain
$$
  \Prb[X_{\calA}\le(1-\eta)\tau]
  \le
  \exp\left(
    -\frac{\eta^2\tau^2}{2\Lambda_{\calA}}
  \right)
  \le
  \exp\left(
    -\frac{\eta^2\tau}{2mH_r(\alpha)}
  \right).
$$

\end{proof}

\section{Ranked kernel extraction}
\label{sec:extraction}

We now show that every distribution on small indexed sets contains a large
ranked-spread subfamily after the removal of a small kernel.  The boundary
selection is inspired by the robust-daisy extraction of Goldberg, Gur, and
Saraogi~\cite{goldberg2026robust}.  The new point is to retain the ordered
one-coordinate marginal information produced by the proof.

\begin{lemma}[Interval packing]
\label{lem:interval-packing}
Let $A$ be a finite multiset of integers.  Call an integer $j$ bad if there is
$r\ge1$ such that
$|A\cap\{j-r,j-r+1,\ldots,j-1\}|\ge r,
$
where multiplicities are counted.  Then the number of bad integers is at most
$|A|$.
\end{lemma}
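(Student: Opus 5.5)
We prove the bound by a greedy covering (a Vitali/rising-sun type argument) that charges every bad integer to a distinct element of $A$, counted with multiplicity. For a bad integer $j$, call a window $\{j-r,\ldots,j-1\}$ with $r\ge1$ \emph{heavy} if it contains at least $r$ elements of $A$.

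\textbf{Step 1: reduce to a bounded range.} Every bad $j$ satisfies $j\le\max A+1$, since a heavy window must meet $A$. Let $r$ be the length of a heavy window for $j$ and $a=\min A$. The window also meets $A$, so $j-1\ge a$. If $j-r<a$, the window still holds at least $r$ elements, all lying in $\{a,\ldots,j-1\}$. The window $\{a,\ldots,j-1\}$ is then heavy too, because it has length $j-a\le r$. So every bad $j$ has a heavy window contained in $[\min A,\max A]$, and only finitely many integers are bad.

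\textbf{Step 2: scan from the right and pack disjoint windows.} Let $j_1$ be the largest bad integer, and choose a heavy window $I_1=\{j_1-r_1,\ldots,j_1-1\}$ for it. Define $B_1=I_1\cup\{j_1\}$ as a block of $r_1+1$ consecutive integers. Now suppose some bad integer $j\in I_1$ is not covered in the way we need.

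The key claim is a \emph{merging} claim. If $j$ is bad with heavy window $J=\{j-s,\ldots,j-1\}$, and $j$ lies strictly inside a region already certified to have density $\ge1$ from $j$ upward, then $J$ combined with that region is again dense. We avoid proving this directly by using a cleaner formulation.

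\textbf{Step 3: the counting via the discrete rising sun lemma.} Define $f(i)=|A\cap(-\infty,i-1]|-i$. Then $j$ is bad iff there is $r\ge1$ with
\[
  f(j)-f(j-r)=|A\cap\{j-r,\ldots,j-1\}|-r\ge0,
\]
that is, iff $f(j)\ge\min_{i<j}f(i)$. In other words, $j$ is bad exactly when $j$ is \emph{not} a strict new running minimum of $f$ scanning from the left.

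As $i$ increases by one, $f$ changes by $|A\cap\{i\}|-1$. So $f$ decreases by exactly one at each step with no element of $A$, and otherwise rises by (multiplicity $-1$). Start at $i_0$ below $\min A$, where $f$ has been strictly decreasing. Since every sufficiently negative integer is a strict new minimum, we count \emph{non-new-minimum} integers in $(\min A,\max A+1]$. Take $N$ past $\max A+1$; then $f(N)=|A|-N$.

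Over the interval from $\min A$ to $N$ there are $N-\min A$ steps. The running minimum must drop from $f(\min A)=-\min A$ to at least $f(N)$, a total drop of at most $N-\min A-|A|$. Each new strict minimum lowers the running minimum by exactly one, since $f$ moves downward in unit steps. Hence the number of new minima is at most $N-\min A-|A|$, and at least that many are needed to reach $f(N)$. The non-minima therefore number at most $(N-\min A)-(N-\min A-|A|)=|A|$.

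The main care point is verifying that strict new minima occur only via unit decrements, which holds because $f$ decreases by at most one per step.
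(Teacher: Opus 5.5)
Your Step~3 is a correct proof by a route different from the paper's. Two claims around it are false, however, and the write-up needs repairing.

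First, Step~1 fails for multisets. A heavy window must meet $A$, but that gives only $j-r\le\max A$, not $j-1\le\max A$. For $A=\{0,0\}$, the integer $j=2$ is bad via the window $\{0,1\}$. So bad integers need not lie in $(\min A,\max A+1]$, and heavy windows need not lie in $[\min A,\max A]$; the correct bound is $j\le\max A+|A|$. You do not need Step~1 at all. No integer $\le\min A$ is bad, and your counting shows that $(\min A,N]$ contains at most $|A|$ non-records for \emph{every} $N\ge\max A+1$. Letting $N\to\infty$ then covers all bad integers.

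Second, the running minimum at $N$ is at most $f(N)=|A|-N$. So its drop from $-\min A$ is \emph{at least} $N-\min A-|A|$, not at most. The claimed upper bound on the number of records fails for small $N$: for $A=\{0,2,2\}$ and $N=3$ there is one record in $(0,3]$, while $N-\min A-|A|=0$. Your final subtraction uses only the lower bound, which is correct because each record lowers the running minimum by exactly one. Step~2 is an abandoned fragment and should be deleted.

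The paper proves the lemma by the right-to-left greedy charging your Step~2 was reaching for, and no merging claim is needed. Take the largest bad $j$ with witness $r$ and discard the at most $r$ bad integers in $\{j-r+1,\ldots,j\}$. Charge them to the at least $r$ elements of $A$ in $\{j-r,\ldots,j-1\}$. The next bad integer is at most $j-r$, so its window lies strictly to the left, and all charging windows are disjoint. That argument is shorter and gives an explicit injection into $A$.

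Your potential-function argument instead characterizes bad integers exactly as the non-records of $f(i)=|A\cap(-\infty,i-1]|-i$. Because $f$ steps down by at most one, it gives more. For all sufficiently large $N$, $f(N)$ is itself a record, so the number of records in $(\min A,N]$ is exactly $N-\min A-|A|$. Hence there are exactly $|A|$ bad integers, and the lemma is always tight.
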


\begin{proof}
Greedily process the bad integers from right to left.  Let $j$ be the largest
remaining bad integer, and choose $r\ge1$ witnessing its badness.  Remove all
remaining bad integers in
$\{j-r+1,\ldots,j\}$.
At most $r$ bad integers are removed, while the interval
$\{j-r,\ldots,j-1\}$ contains at least $r$ elements of $A$, counted with
multiplicity.  Charge the removed bad integers to distinct such elements.

At the next step, the chosen bad integer is at most $j-r$, so its charging
interval lies strictly to the left of the preceding one.  Thus the charging
intervals are pairwise disjoint.  The total number of removed bad integers is
therefore at most $|A|$.
\end{proof}

\begin{theorem}[Ranked kernel extraction]
\label{thm:ranked-extraction}
Let $(Q_t)_{t\in\calT}$ be an indexed set system on a ground set $U$ of size
$n$, equipped with a probability distribution $\mu$.  Suppose that
$|Q_t|\le q$ for every $t$, and let
$d=\E_{t\sim\mu}|Q_t|$. Assume $d>0$, let $c>d$ be an integer, and set
$k=n^{1/c}$.
Then there exist $j\in\{1,\ldots,c\}$, a subfamily
$D\subseteq\calT$, a kernel $K\subseteq U$, and a scale
$B={n}/{k^j}=n^{1-j/c}$ such that
$\mu(D)\ge1-d/c$, and $|K|\le B\le n^{1-1/c}$. Moreover, the indexed petal system
$P_t=Q_t\setminus K$, for all $t\in D$,
under the conditioned distribution $\mu_D$, is
$\left({d}/{\mu(D)B},k\right)
$
-ranked-spread.
\end{theorem}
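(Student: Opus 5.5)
The plan is to classify ground-set elements by the geometric scale of their one-coordinate marginals. We choose the boundary scale $j$ by averaging the bad events of \cref{lem:interval-packing} over $j\in\{1,\ldots,c\}$. The kernel is then the set of elements whose marginal exceeds the boundary. The basic identity is $\sum_{x\in U}\mu(x)=\E_{t\sim\mu}|Q_t|=d$. Throughout we may discard indices of zero mass, so every $x$ in some $Q_t$ with $\mu(t)>0$ has $\mu(x)>0$.

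For such $x$, let $v(x)$ be the least integer $v$ (possibly negative) with $\mu(x)\le (d/n)k^{v}$. For a given $j$, set $B=n/k^j$ and
$$
  K=\{x:\mu(x)>d/B\}=\{x: v(x)>j\}.
$$
Since the marginals sum to $d$, we get $|K|<B$, and $B\le n^{1-1/c}$ because $j\ge1$. To each $t$ attach the multiset $A_t=\{v(x):x\in Q_t\}$, so $|A_t|=|Q_t|$. Call $t$ \emph{bad for $j$} if $j+1$ is a bad integer for $A_t$ in the sense of \cref{lem:interval-packing}. Explicitly, this means there is $r\ge1$ with $|\{x\in Q_t: j+1-r\le v(x)\le j\}|\ge r$.

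By \cref{lem:interval-packing}, each $t$ is bad for at most $|Q_t|$ values of $j$. Hence
$$
  \sum_{j=1}^{c}\mu(\{t:t\text{ bad for }j\})\le \E|Q_t|=d,
$$
so some $j\in\{1,\ldots,c\}$ has bad mass at most $d/c$. With this $j$, let $D$ be the set of good indices; then $\mu(D)\ge1-d/c$.

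For the ranked-spread property, fix $t\in D$ and order $P_t=Q_t\setminus K$ by decreasing $v$. All elements of $P_t$ satisfy $v\le j$. Suppose the $i$-th element $x_i(t)$ had $v(x_i(t))\ge j-i+1$. Then $x_1(t),\ldots,x_i(t)$ would be $i$ elements of $Q_t$ with $v\in\{j+1-i,\ldots,j\}$, so $t$ would be bad with witness $r=i$, a contradiction. Therefore $v(x_i(t))\le j-i$, which gives
$$
  \mu(x_i(t))\le (d/n)k^{j-i}=(d/B)k^{-i}.
$$
Under $\mu_D$, the marginal of $x_i(t)$ with respect to the petal system is at most $\mu(x_i(t))/\mu(D)\le \frac{d}{\mu(D)B}k^{-i}$. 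This is exactly $(d/(\mu(D)B),k)$-ranked-spreadness.

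The only delicate step is aligning the index conventions. We must choose the kernel threshold ($v>j$ rather than $v\ge j$) so that $|K|\le B$ holds. At the same time, the badness window ending at $j$ has to be the window appearing in \cref{lem:interval-packing} for the integer $j+1$; this is why badness is tested at $j+1$. Once these conventions are fixed, the argument above goes through as written.
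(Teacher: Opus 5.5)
Your proposal is correct and follows essentially the same route as the paper. Both arguments use geometric marginal levels and a kernel of high-marginal elements at scale $j$, control badness via \cref{lem:interval-packing}, average over $j\in[c]$, and read off the rank bounds from the nonincreasing-marginal ordering. The remaining differences are cosmetic index conventions: you use ceiling-type levels $v(x)$, a strict kernel threshold, and apply the lemma at $j+1$, whereas the paper uses floor-type levels $\ell(x)$, the non-strict threshold $\delta(x)\ge\lambda_j$, and a goodness condition phrased over subsets $R\subseteq Q_t\setminus K_j$. The effect is only to trade the paper's $|K|\le B$ with strict rank bounds for your $|K|<B$ with non-strict ones.
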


\begin{proof}
For $x\in U$, define its one-coordinate marginal
$\delta(x)=\mu\bigl(\{t:x\in Q_t\}\bigr)$.
Then
\begin{equation}
\label{eq:sum-marginals}
  \sum_{x\in U}\delta(x)
  =\E_{t\sim\mu}|Q_t|
  =d.
\end{equation}
For every integer $s$, let
$\lambda_s=d\frac{k^s}{n}$.
For $j\in\{1,\ldots,c\}$, define
$K_j=\{x\in U:\delta(x)\ge\lambda_j\}$.
By \eqref{eq:sum-marginals},
$|K_j|\lambda_j\le d$,
and hence
\begin{equation}
\label{eq:kernel-size}
  |K_j|\le\frac{d}{\lambda_j}
  =\frac{n}{k^j}.
\end{equation}

We say that $j$ is \emph{good} for an index $t$ if every nonempty
$R\subseteq Q_t\setminus K_j$ contains some $x\in R$ satisfying
\begin{equation}
\label{eq:good-boundary}
  \delta(x)<\lambda_{j-|R|}.
\end{equation}
Otherwise, $j$ is bad for $t$.

Fix $t\in\calT$.  Every $x\in Q_t$ has positive marginal, because $t$ has
positive probability.  Choose an integer $\ell(x)$ such that
$\lambda_{\ell(x)}
  \le\delta(x)<\lambda_{\ell(x)+1}$,
and let $A_t$ be the multiset of the levels $\ell(x)$, for $x\in Q_t$.
If $j$ is bad for $t$, there is a nonempty
$R\subseteq Q_t\setminus K_j$, say $|R|=r$, such that
$\delta(x)\ge\lambda_{j-r}$ for every $x\in R$.
Since $R\cap K_j=\emptyset$, we also have $\delta(x)<\lambda_j$.  Therefore
$j-r\le\ell(x)\le j-1$ for every $x\in R$.
The interval $\{j-r,\ldots,j-1\}$ thus contains at least $r$ elements of
$A_t$.  By \cref{lem:interval-packing}, the number of integers $j$ that are
bad for $t$ is at most
$|A_t|=|Q_t|$.

Let
$D_j=\{t\in\calT:j\text{ is good for }t\}$.
Averaging over $j\in[c]$ and then over $t\sim\mu$ gives
\begin{align*}
  \frac1c\sum_{j=1}^{c}\mu(D_j)
  &=1-\frac1c
    \E_{t\sim\mu}
    |\{j\in[c]:j\text{ is bad for }t\}|\\
  &\ge1-\frac1c\E_{t\sim\mu}|Q_t|\\
  &=1-\frac dc.
\end{align*}
Choose $j$ with $\mu(D_j)\ge1-d/c$, and let
$D=D_j$, $K=K_j$, and 
$B=n/{k^j}$.
The asserted kernel bounds follow from \eqref{eq:kernel-size}.

It remains to verify ranked spreadness.  Fix $t\in D$, and order the elements
of its petal $P_t=Q_t\setminus K$ by nonincreasing marginal:
$\delta(x_1(t))\ge\delta(x_2(t))\ge\cdots
  \ge\delta(x_{|P_t|}(t))$.

For $i\in[|P_t|]$, apply \eqref{eq:good-boundary} to
$R_i=\{x_1(t),\ldots,x_i(t)\}$.
Some $x\in R_i$ satisfies $\delta(x)<\lambda_{j-i}$.  Since $x_i(t)$ has the
smallest marginal in $R_i$,
$$
  \delta(x_i(t))<\lambda_{j-i}
  =d\frac{k^{j-i}}n
  =\frac dB k^{-i}.
$$
As $x_i(t)\notin K$, membership in a petal is the same as membership in the
original query set.  Therefore
\begin{align*}
  \mu_D\bigl(\{s\in D:x_i(t)\in P_s\}\bigr)
  \le\frac{\delta(x_i(t))}{\mu(D)}<\frac{d}{\mu(D)B}k^{-i}.
\end{align*}
This is precisely the claimed ranked-spread property.
\end{proof}

Combining extraction with \cref{thm:ranked-hitting} gives the form used in
the testing argument.

\begin{corollary}[Ranked kernel hitting]
\label{cor:ranked-kernel-hitting}
Under the hypotheses and notation of \cref{thm:ranked-extraction}, let
$\calA\subseteq D$ and let
$W\sim\Bin\left(U,\frac\alpha k\right)$,
$\alpha/k\le1$.
If $r=\max_{t\in D}|Q_t\setminus K|$, then
\begin{equation}
\label{eq:kernel-hitting-exact}
  \Prb_W\bigl[
    \nexists t\in\calA:\ Q_t\setminus K\subseteq W
  \bigr]
  \le
  \exp\left(
    -\frac{B\mu(\calA)}{2dH_r(\alpha)}
  \right).
\end{equation}
In particular, if $\alpha>1$, then
\begin{equation}
\label{eq:kernel-hitting-simple}
  \Prb_W\bigl[
    \nexists t\in\calA:\ Q_t\setminus K\subseteq W
  \bigr]
  \le
  \exp\left(
    -\frac{(\alpha-1)B}{2d}\mu(\calA)
  \right).
\end{equation}
\end{corollary}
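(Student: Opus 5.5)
We apply \cref{thm:ranked-hitting} directly to the petal system produced by \cref{thm:ranked-extraction}. Take the index set to be $D$ with the conditioned distribution $\mu_D$, and attach to each $t\in D$ the petal $P_t=Q_t\setminus K$. By \cref{thm:ranked-extraction}, this system is $(m,k)$-ranked-spread with $m=d/(\mu(D)B)$. Every petal has size at most $r=\max_{t\in D}|Q_t\setminus K|$. The sampling density is $p=\alpha/k\le1$, exactly as \cref{thm:ranked-hitting} requires.

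For the subfamily $\calA\subseteq D$, the relevant mass in the conditioned space is $\tau=\mu_D(\calA)=\mu(\calA)/\mu(D)$. Moreover, the event $\{\nexists t\in\calA: Q_t\setminus K\subseteq W\}$ coincides with the event bounded in \cref{thm:ranked-hitting}. That event depends only on the indexed sets and on $W$, not on the weights, so conditioning on $D$ does not change it. Substituting into \eqref{eq:ranked-general}, the exponent becomes
\begin{equation*}
  \frac{\tau}{2mH_r(\alpha)}
  =\frac{\mu(\calA)/\mu(D)}{2\,\frac{d}{\mu(D)B}\,H_r(\alpha)}
  =\frac{B\mu(\calA)}{2dH_r(\alpha)}.
\end{equation*}
The two factors of $\mu(D)$ cancel, which gives \eqref{eq:kernel-hitting-exact}. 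For $\alpha>1$, the bound $H_r(\alpha)\le 1/(\alpha-1)$ then yields \eqref{eq:kernel-hitting-simple}.

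The points needing care are mostly bookkeeping. The ground set for $W$ is all of $U$, whereas the petals avoid $K$; this is harmless, since \cref{thm:ranked-hitting} permits any ground set containing the petals, and extra sampled coordinates are irrelevant to the event. If some petal in $\calA$ is empty, the event has probability zero, and \cref{thm:ranked-hitting} already handles this case. If $r=0$, all petals are empty and the bound holds trivially. Finally, the marginals in the ranked-spread condition are computed in $\mu_D$ over $s\in D$, which is exactly the normalization given by \cref{thm:ranked-extraction}.
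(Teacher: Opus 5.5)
Your proposal is correct and follows the paper's own proof: apply \cref{thm:ranked-hitting} to the petal system under $\mu_D$ with $m=d/(\mu(D)B)$ and $\tau=\mu(\calA)/\mu(D)$, cancel the factors of $\mu(D)$, and obtain the supercritical form from $H_r(\alpha)\le 1/(\alpha-1)$. Your extra remarks on the ground set $U$ and on empty petals only make explicit what the paper leaves implicit.
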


\begin{proof}
The petal distribution under $\mu_D$ is
$(d/(\mu(D)B),k)$-ranked-spread.  The $\mu_D$-mass of $\calA$ is
$\mu(\calA)/\mu(D)$.  Substitution into \cref{thm:ranked-hitting} cancels the
factor $\mu(D)$ and gives \eqref{eq:kernel-hitting-exact}.  The final assertion
follows from $H_r(\alpha)\le1/(\alpha-1)$.
\end{proof}

\subsection{Weighted kernel concentration}
\label{subsec:weighted-kernel}

The weighted analogue of \cref{cor:ranked-kernel-hitting} follows by applying the preceding theorem under the conditioned petal distribution.

\begin{corollary}[Weighted ranked kernel concentration]
\label{cor:kernel-concentration}
Under the notation of \cref{thm:ranked-extraction}, let
$\calA\subseteq D$, write $\tau=\mu(\calA)$, and let
$W\sim\Bin\left(U,\frac\alpha k\right)$, and $\frac\alpha k\le1$.
Define
$$
  X_{\calA}
  =
  \sum_{t\in\calA}
  \mu(t)\left(\frac\alpha k\right)^{-|P_t|}
  \one_{\{P_t\subseteq W\}}.
$$
Then
$$
  \E X_{\calA}=\tau,
  \qquad
  \operatorname{Var}(X_{\calA})
  \le
  \frac{\tau d}{B}H_q(\alpha),
$$
and, for every $\eta\in[0,1]$,
\begin{equation}
\label{eq:kernel-lower-tail}
  \Prb[X_{\calA}\le(1-\eta)\tau]
  \le
  \exp\left(
    -\frac{\eta^2\tau B}{2dH_q(\alpha)}
  \right).
\end{equation}
\end{corollary}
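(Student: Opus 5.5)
We apply \cref{thm:weighted-ranked} to the petal system $(P_t)_{t\in D}$, $P_t=Q_t\setminus K$, under the conditioned distribution $\mu_D$. By \cref{thm:ranked-extraction} this system is $(m',k)$-ranked-spread with $m'=d/(\mu(D)B)$, and every petal has size at most $q$, so we may take $r=q$ there. The subfamily $\calA\subseteq D$ has $\mu_D$-mass $\tau'=\tau/\mu(D)$. The weights in \cref{thm:weighted-ranked} computed under $\mu_D$ are $\mu_D(t)p^{-|P_t|}=\mu(t)p^{-|P_t|}/\mu(D)$ with $p=\alpha/k$. The estimator furnished by that theorem is therefore $X'_{\calA}=X_{\calA}/\mu(D)$, where $X_{\calA}$ is the quantity in the statement.

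Next we transfer the three conclusions through this rescaling. First, $\E X'_{\calA}=\tau'$ gives $\E X_{\calA}=\mu(D)\tau'=\tau$. Second, the variance bound $\operatorname{Var}(X'_{\calA})\le\tau' m' H_q(\alpha)$ multiplies by $\mu(D)^2$ to give
\[
  \operatorname{Var}(X_{\calA})\le \mu(D)^2\cdot\frac{\tau}{\mu(D)}\cdot\frac{d}{\mu(D)B}H_q(\alpha)=\frac{\tau d}{B}H_q(\alpha).
\]
Third, the events $\{X_{\calA}\le(1-\eta)\tau\}$ and $\{X'_{\calA}\le(1-\eta)\tau'\}$ coincide. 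Their probability is at most $\exp\bigl(-\eta^2\tau'/(2m'H_q(\alpha))\bigr)$. Here $\tau'/m'=\tau B/d$, since the factors of $\mu(D)$ cancel, and this yields \eqref{eq:kernel-lower-tail}.

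The only point needing care is bookkeeping rather than a real obstacle. We must check that the normalization of $\mu_D$ enters the weights, the mass, and the spread parameter consistently, so that all $\mu(D)$ factors cancel exactly as in \cref{cor:ranked-kernel-hitting}. We should also note that $H_r(\alpha)\le H_q(\alpha)$ for $r\le q$, since every term of the sum is positive. This justifies stating the bound with $H_q$ even if the maximal petal size is smaller.

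Finally, the condition $p=\alpha/k\le1$ is assumed in the statement, so \cref{thm:weighted-ranked} applies directly. Empty petals need no separate treatment here, because they are already handled inside the proof of that theorem.
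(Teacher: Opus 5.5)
Your proposal is correct and matches the paper's proof: you apply \cref{thm:weighted-ranked} under $\mu_D$ with score $X_{\calA}/\mu(D)$, mass $\tau/\mu(D)$, ranked parameter $d/(\mu(D)B)$ and $r=q$, and all factors of $\mu(D)$ cancel in the mean, variance and lower-tail bounds. Your explicit verification of these cancellations is accurate.
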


\begin{proof}
Apply \cref{thm:weighted-ranked} under $\mu_D$.  Its weighted score is
$X_{\calA}/\mu(D)$, its subfamily mass is $\tau/\mu(D)$, and its ranked
parameter is $d/(\mu(D)B)$.  The factors of $\mu(D)$ cancel in both the
variance and lower-tail estimates.
\end{proof}

\subsection{A robust daisy corollary}
\label{subsec:robust-daisies}

We now translate the preceding result into the terminology of Goldberg, Gur,
and Saraogi~\cite{goldberg2026robust}.  The definition is stated for
distributions on sets rather than indexed set systems, since this is the form
used in their work.

\begin{definition}[Robust daisy]
\label{def:robust-daisy}
Let $\nu$ be a probability distribution on a finite family of subsets of a
ground set $U$, let $p,\eta\in(0,1)$, and let $K\subseteq U$.  We say that
$\nu$ is a \emph{$(p,\eta)$-robust daisy with kernel $K$} if, for every
$\calA\subseteq\supp(\nu)$,
\begin{equation}
\label{eq:robust-daisy-definition}
  \Prb_{W\sim\Bin(U,p)}
  \bigl[\exists S\in\calA:\ S\setminus K\subseteq W\bigr]
  \ge 1-\eta^{\nu(\calA)}.
\end{equation}
Equivalently, the event in \eqref{eq:robust-daisy-definition} is that
$S\subseteq K\cup W$ for some $S\in\calA$.  The sets $S\setminus K$ are
called the \emph{petals}.
\end{definition}

Thus robustness is required not only for the full support, but for every
subfamily, with a failure probability that decays exponentially in its
probability mass.

\begin{corollary}[Improved robust daisy lemma]
\label{cor:robust-daisy}
Let $\mu$ be a probability distribution on subsets of an $n$-element ground
set $U$, suppose that every set in $\supp(\mu)$ has size at most $q$, and put
$d=\E_{S\sim\mu}|S|>0$.
Let $c>d$ be an integer and set $k=n^{1/c}$.  Then there exist
$\calD\subseteq\supp(\mu)$, a kernel $K\subseteq U$, an index
$j\in\{1,\ldots,c\}$, and
$B=\frac{n}{k^j}=n^{1-j/c}
$
such that
\begin{equation}
\label{eq:robust-daisy-extraction-parameters}
  \mu(\calD)\ge 1-\frac dc,
  \qquad
  |K|\le B\le n^{1-1/c}.
\end{equation}
Moreover, for every $\alpha>1$ for which
$p=\frac{\alpha}{k}=\alpha n^{-1/c}\le1$,
the conditioned distribution $\mu_{\calD}$ is a $(p,\eta)$-robust daisy with
kernel $K$, where
\begin{equation}
\label{eq:robust-daisy-eta}
  \eta=
  \exp\left(
    -\frac{(\alpha-1)\mu(\calD)B}{2d}
  \right).
\end{equation}
In particular, if only the worst-case bound $|S|\le q$ is retained and
$c>q$, then
\begin{equation}
\label{eq:robust-daisy-kernel-form}
  \mu(\calD)\ge1-\frac qc,
  \qquad
  \eta
  \le
  \exp\left(
    -\frac{(\alpha-1)(1-q/c)}{2q}|K|
  \right).
\end{equation}
\end{corollary}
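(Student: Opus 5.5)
The plan is to apply \cref{thm:ranked-extraction} to the indexed system $Q_S=S$, indexed by $S\in\supp(\mu)$. This produces $j$, $D$, $K$ and $B$. Put $\calD=D$; then \eqref{eq:robust-daisy-extraction-parameters} is exactly the extraction guarantee. Because the index set is $\supp(\mu)$ itself, indexed subfamilies and subfamilies of sets coincide. There is therefore no loss in moving between the indexed language and the distribution-on-sets language of \cref{def:robust-daisy}.

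For robustness, fix $\alpha>1$ with $p=\alpha/k\le1$ and an arbitrary $\calA\subseteq\supp(\mu_{\calD})=\calD$. The key identity is $\mu_{\calD}(\calA)=\mu(\calA)/\mu(\calD)$. By \cref{cor:ranked-kernel-hitting}, in the form \eqref{eq:kernel-hitting-simple}, the failure probability is at most
$$
\exp\bigl(-(\alpha-1)B\mu(\calA)/(2d)\bigr)
=\exp\Bigl(-\frac{(\alpha-1)\mu(\calD)B}{2d}\,\mu_{\calD}(\calA)\Bigr)
=\eta^{\mu_{\calD}(\calA)}.
$$
Taking complements gives \eqref{eq:robust-daisy-definition}. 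The event $\exists S\in\calA:\ S\setminus K\subseteq W$ is exactly the event bounded in the corollary. If $\calA$ is empty, both sides are trivial, since $\eta^0=1$.

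For the worst-case form, use $d\le q<c$. This gives $\mu(\calD)\ge1-d/c\ge1-q/c$ and $B/d\ge B/q$. Then use $|K|\le B$, so that
$$
\frac{(\alpha-1)\mu(\calD)B}{2d}\ge\frac{(\alpha-1)(1-q/c)}{2q}|K|,
$$
which yields \eqref{eq:robust-daisy-kernel-form}. The hypothesis $c>d$ needed for extraction follows from $c>q\ge d$.

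I expect no genuine obstacle; the main care point is the normalization. Definition \cref{def:robust-daisy} measures mass under $\mu_{\calD}$, whereas \cref{cor:ranked-kernel-hitting} uses $\mu$. This mismatch is exactly why the factor $\mu(\calD)$ appears in $\eta$. It also has to be checked that the extraction's ranked-spread parameter is stated under $\mu_D$, which the corollary already accounts for.
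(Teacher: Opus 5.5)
Your proposal is correct and follows the paper's proof essentially step for step. You apply \cref{thm:ranked-extraction} to the system indexed by $\supp(\mu)$, invoke \eqref{eq:kernel-hitting-simple} for $\calA\subseteq\calD$, rewrite $\mu(\calA)=\mu(\calD)\,\mu_{\calD}(\calA)$ to obtain $\eta^{\mu_{\calD}(\calA)}$, and finish with $d\le q$, $\mu(\calD)\ge1-q/c$ and $B\ge|K|$; your extra remarks ($\supp(\mu_{\calD})=\calD$, the empty-$\calA$ case, and $c>q\ge d$ supplying the extraction hypothesis) are details the paper leaves implicit.
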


\begin{proof}
Apply \cref{thm:ranked-extraction} to the indexed system whose indices are the
sets in $\supp(\mu)$ and whose attached set at index $S$ is $S$.  This gives
$\calD,K,j$, and $B$ satisfying
\eqref{eq:robust-daisy-extraction-parameters}.  Let
$\calA\subseteq\calD$.  By \cref{cor:ranked-kernel-hitting},
\begin{align*}
  \Prb_W\bigl[
    \nexists S\in\calA:\ S\setminus K\subseteq W
  \bigr]
  &\le
  \exp\left(
    -\frac{(\alpha-1)B}{2d}\mu(\calA)
  \right)\\
  &=
  \left[
    \exp\left(
      -\frac{(\alpha-1)\mu(\calD)B}{2d}
    \right)
  \right]^{\mu_{\calD}(\calA)}\\
  &=\eta^{\mu_{\calD}(\calA)}.
\end{align*}
This is precisely \cref{def:robust-daisy}.  Finally, $d\le q$,
$\mu(\calD)\ge1-d/c\ge1-q/c$, and $B\ge|K|$, which give
\eqref{eq:robust-daisy-kernel-form}.
\end{proof}

The preceding corollary is the promised robust-daisy deduction.  In the
normalization of Goldberg, Gur, and Saraogi
\cite[Lemma~5.1]{goldberg2026robust}, their stated extraction applies for
$\alpha>2q$ and gives the robustness parameter
$$
  \exp\left(
    -\frac{\alpha(1-q/c)}{8q^2\log|\calD|}|K|
  \right).
$$
By contrast, \cref{cor:robust-daisy} applies for every $\alpha>1$ and gives
the support-size-free exponent
$$
  \frac{(\alpha-1)(1-q/c)}{2q}|K|.
$$
More strongly, the natural exponent in \eqref{eq:robust-daisy-eta} is
proportional to the extraction scale $B$, which may be larger than the
kernel itself.  The improvement is exactly where ranked spreadness is used.

\section{Sample-based simulation}
\label{sec:simulation}

The two reductions use the same extracted kernel but exploit it in different
ways.  We therefore prove them separately.

\subsection{The one-sided reduction}
\label{subsec:one-sided-simulation}

\begin{proof}[Proof of \cref{thm:one-sided-main}]
Represent $T$ as a probability distribution $\mu$ on indexed deterministic
local tests $t=(Q_t,\varphi_t)$.  Apply \cref{thm:ranked-extraction} with the
chosen integer $c$.  We obtain a retained family $D\subseteq\calT$, a kernel
$K$, and a scale $B$ such that
\begin{equation}
\label{eq:one-sided-extraction}
  \mu(D)\ge1-\frac dc,
  \qquad
  |K|\le B\le n^{1-1/c}.
\end{equation}
Let $W\sim\Bin([n],p)$ with
$p=\alpha_{\mathrm{hit}}n^{-1/c}$.  For sufficiently large $n$, $p\le1$.

Given $(W,x|_W)$, process every hypothetical assignment $a\in\Sigma^K$.
Call $a$ \emph{eliminated} if there is a test $t\in D$ such that
$P_t:=Q_t\setminus K\subseteq W$ and $t$ rejects the local word obtained by
combining $a$ on $Q_t\cap K$ with the observed values on $P_t$.  Reject if and
only if every assignment is eliminated.

If $x\in\Pi$, the true assignment $a_*=x|_K$ is never eliminated, since every
exposed view is an actual view of $x$ and the original tester is one-sided.
Thus completeness is perfect.

Now suppose that $x$ is $\eps$-far from $\Pi$.  For $a\in\Sigma^K$, define
$x^{(a)}$ by replacing $x|_K$ with $a$.  Since
$|K|\le n^{1-1/c}\le\eps n/2$ for all sufficiently large $n$, every
$x^{(a)}$ is $\eps/2$-far from $\Pi$.  Hence its rejecting family
$\calR_a=\{t\in\calT:\varphi_t(x^{(a)}|_{Q_t})=1\}
$
has mass at least $\delta$, and therefore
\begin{equation}
\label{eq:one-sided-retained-mass}
  \mu(D\cap\calR_a)
  \ge\delta-\frac dc
  =\beta.
\end{equation}
By \cref{cor:ranked-kernel-hitting},
$$
  \Prb[a\text{ is not eliminated}]
  \le
  \exp\left(-\frac{(\alpha_{\mathrm{hit}}-1)\beta B}{2d}\right).
$$
There are at most $|\Sigma|^{|K|}\le\exp(B\log|\Sigma|)$ assignments.  A
union bound and the definition of $\alpha_{\mathrm{hit}}$ give
$$
  \Prb[\text{the new tester accepts }x]
  \le
  \exp\left(B\log|\Sigma|
       -\frac{(\alpha_{\mathrm{hit}}-1)\beta B}{2d}\right)
  =e^{-B\log3}
  \le\frac13.
$$
Finally, $\E|W|=pn=\alpha_{\mathrm{hit}}n^{1-1/c}$.
\end{proof}

\subsection{The two-sided reduction}
\label{subsec:two-sided-simulation}

The Boolean elimination rule cannot be used when $\gamma>0$: the true kernel
assignment may itself expose rejecting tests.  Instead, for each kernel
assignment we estimate the retained rejection mass and compare it with a
threshold lying strictly between the completeness and soundness regimes.

\begin{proof}[Proof of \cref{thm:two-sided-main}]
Represent $T$ as a distribution $\mu$ on indexed deterministic local tests
$t=(Q_t,\varphi_t)$ and apply \cref{thm:ranked-extraction}.  We obtain
$D,K,B$ satisfying
\begin{equation}
\label{eq:two-sided-extraction}
  \mu(D)\ge1-\frac dc,
  \qquad
  |K|\le B\le n^{1-1/c}.
\end{equation}
Let $p=\alpha_{\mathrm{wt}}n^{-1/c}$,
$W\sim\Bin([n],p)$, and
$\theta={\beta+\gamma}/{2}$.
For sufficiently large $n$, $p\le1$.

For $a\in\Sigma^K$, let $x^{(a)}$ agree with $a$ on $K$ and with $x$ outside
$K$, and let
$$
  \calR_a
  =\{t\in\calT:\varphi_t(x^{(a)}|_{Q_t})=1\}.
$$
Define the observable weighted rejection score
\begin{equation}
\label{eq:two-sided-score}
  Z_a
  =
  \sum_{t\in D\cap\calR_a}
  \mu(t)p^{-|P_t|}\one_{\{P_t\subseteq W\}},
  \qquad P_t=Q_t\setminus K.
\end{equation}
When $P_t\subseteq W$, the predicate value in the summand is determined by
the sample together with $a$.  The tester accepts if and only if
\begin{equation}
\label{eq:two-sided-decision}
  \min_{a\in\Sigma^K} Z_a\le\theta.
\end{equation}

\emph{Completeness.}
Suppose $x\in\Pi$ and take $a_*=x|_K$.  Put
$\tau_*=\mu(D\cap\calR_{a_*})$.  Then $\tau_*\le r_T(x)\le\gamma$.
By \cref{cor:kernel-concentration},
$$
  \E Z_{a_*}=\tau_*,
  \qquad
  \operatorname{Var}(Z_{a_*})
  \le\frac{\tau_*d}{B}H_q(\alpha_{\mathrm{wt}})
  \le\frac{\gamma d}{B}H_q(\alpha_{\mathrm{wt}}).
$$
Since $\theta-\tau_*\ge g/2$, Chebyshev's inequality and
$H_q(\alpha)\le(\alpha-1)^{-1}$ give
\begin{align*}
  \Prb[\text{the new tester rejects }x]
  &\le \Prb[Z_{a_*}>\theta]\\
  &\le \frac{4\gamma dH_q(\alpha_{\mathrm{wt}})}{Bg^2}\\
  &\le \frac{4\gamma d}{(\alpha_{\mathrm{wt}}-1)g^2}
  \le\frac13.
\end{align*}
The case $\gamma=0$ is included: then $\tau_*=0$ and $Z_{a_*}=0$ almost
surely.

\emph{Soundness.}
Suppose $x$ is $\eps$-far from $\Pi$.  As above, every completion $x^{(a)}$
is $\eps/2$-far for sufficiently large $n$.  Therefore
\begin{equation}
\label{eq:two-sided-retained-mass}
  \tau_a:=\mu(D\cap\calR_a)
  \ge\delta-\frac dc
  =\beta.
\end{equation}
Since $\theta<\tau_a$, apply \cref{cor:kernel-concentration} with relative
deviation $1-\theta/\tau_a$.  Using $\tau_a\le1$ and
$\tau_a-\theta\ge g/2$, we obtain
\begin{align}
  \Prb[Z_a\le\theta]
  &\le
  \exp\left(-\frac{B(\tau_a-\theta)^2}
                  {2dH_q(\alpha_{\mathrm{wt}})\tau_a}\right)\notag\\
  &\le
  \exp\left(-\frac{Bg^2}{8dH_q(\alpha_{\mathrm{wt}})}\right)\notag\\
  &\le
  \exp\left(-\frac{(\alpha_{\mathrm{wt}}-1)Bg^2}{8d}\right).
\label{eq:two-sided-one-assignment}
\end{align}
A union bound over at most $|\Sigma|^{|K|}\le e^{B\log|\Sigma|}$ assignments
and the definition of $\alpha_{\mathrm{wt}}$ give
$$
  \Prb[\text{the new tester accepts }x]
  \le
  \exp\left(B\log|\Sigma|
       -\frac{(\alpha_{\mathrm{wt}}-1)Bg^2}{8d}\right)
  \le e^{-B\log3}
  \le\frac13.
$$
Finally, $\E|W|=pn=\alpha_{\mathrm{wt}}n^{1-1/c}$.
\end{proof}

\begin{proof}[Proof of \cref{cor:q-formss}]
Use $d\le q$.  In the one-sided case choose
$c=\lfloor q/\delta\rfloor+1$.  In the two-sided case choose
$c=\lfloor q/(\delta-\gamma)\rfloor+1$.  For
$\gamma=1/3$ and $\delta=2/3$, the latter gives $c=3q+1$.
\end{proof}

Note that setting $\gamma=0$ in \cref{thm:two-sided-main} gives the same exponent as
\cref{thm:one-sided-main}, but the weighted proof requires
$\alpha_{\mathrm{wt}}-1=O(d\beta^{-2}\log|\Sigma|)$, whereas the hitting
proof requires only
$\alpha_{\mathrm{hit}}-1=O(d\beta^{-1}\log|\Sigma|)$.  This is not a cosmetic
constant loss: it reflects the distinction between merely finding one
rejecting witness and estimating rejection mass to additive accuracy
comparable with the gap.

\printbibliography

\end{document}